\tikzset{main node/.style={circle,fill=black,draw,minimum width=4pt,inner sep=0pt}}
\newcounter{counter}
\newtheorem{theorem}[counter]{Theorem}
\newtheorem{lemma}[counter]{Lemma}
\theoremstyle{definition}
\newtheorem{definition}[counter]{Definition}
\newenvironment{claim}[1]{\vspace{0.2cm}\par\noindent\emph{Claim.}\space#1}{}
\newcommand{\N}{\mathbb{N}}
\newcommand{\set}[1]{\{#1\}}
\newcommand{\cf}{\textup{cf}}
\newcommand{\parentheses}[1]{{\left( {#1} \right)}}
\newcommand{\p}{\parentheses}
\newcommand{\Set}[1]{{\left\lbrace {#1} \right\rbrace}}
\def\set#1:#2{\Set{{#1} \colon {#2}}}
\begin{document}

\author{Carl B\"urger}
\author{Max Pitz}
\address{University of Hamburg, Department of Mathematics, Bundesstra{\ss}e 55 (Geomatikum), 20146 Hamburg, Germany}
\email{carl.buerger@uni-hamburg.de, max.pitz@uni-hamburg.de}

\title[Partitioning bipartite graphs into monochromatic paths]{Partitioning edge-coloured infinite complete bipartite graphs into monochromatic paths}

\begin{abstract}
In 1978, Richard Rado showed that every edge-coloured complete graph of countably infinite order can be partitioned into monochromatic paths of different colours. He asked whether this remains true for uncountable complete graphs and a notion of \emph{generalised paths}. In 2016, Daniel Soukup answered this in the affirmative and conjectured that a similar result should hold for complete bipartite graphs with bipartition classes of the same infinite cardinality, namely that every such graph edge-coloured with $r$ colours can be partitioned into $2r-1$ monochromatic generalised paths with each colour being used at most twice. 

In the present paper, we give an affirmative answer to Soukup's conjecture.
\end{abstract}

\maketitle

\section{Introduction}

Throughout this paper, the term \emph{colouring} always refers to edge colourings of graphs with finitely many colours. 

In the 1970s, Erd\H{o}s proved (unpublished) that every $2$-coloured complete graph of countably infinite order, i.e.\ every $2$-coloured $K_{\aleph_0}$, can be partitioned into monochromatic paths of different colours, where `path' means either a finite path or a one-way infinite ray. Rado subsequently extended Erd\H{o}s result to any finite number of colours \cite[Theorem 2]{R}. 

In the same paper, Rado then asked whether a similar result holds for all infinite complete graphs, even the uncountable ones. Clearly, it is not possible to partition such a graph into finitely many `usual' paths, as graph-theoretic paths and rays are inherently countable. Hence, Rado introduced the following notion of \emph{generalised path}: A generalised path is a graph $P$ together with a well-order $\prec$ on $V(P)$ (called the \emph{path order} on $P$) satisfying that the set $\{w\in N(v) \colon w\prec v\}$ of down-neighbours of $v$ is cofinal below $v$ for every vertex $v\in V(P)$, i.e.\ for every $v' \prec v$ there is a neighbour $w$ of $v$ with $v' \preceq w \prec v$ (cf. Figure \ref{fig: path}).\vspace{0,5cm}
\begin{figure}[h]
\begin{center}
\def\svgwidth{10cm}
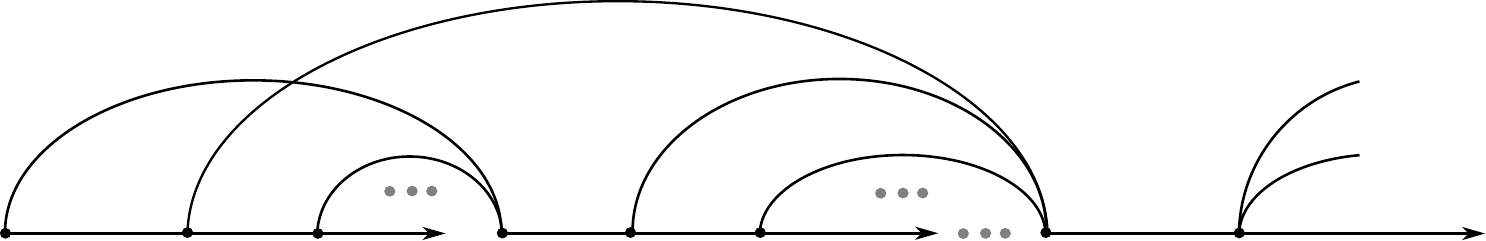
\end{center}
\caption{A generalised path.}
\label{fig: path}
\end{figure} 

In particular, every successor element is adjacent to its predecessor in the well-order. Calling such a graph $P$ a `generalised path' is justified by the fact that between any two vertices $v \prec w$ of $P$ there exists a \emph{finite} path from $v$ to $w$ strictly increasing with respect to $\prec$, see e.g.\ \cite[Observation 5.2]{ESSS17}. If the situation is clear, we write $P$ instead of $(P,\prec)$ and treat $P$ as a graph. By $\Lambda(P,\prec)=\Lambda(P)$ we denote the limit elements of the well-order $(P,\prec)$. If necessary, the path-order $\prec$ on $V(P)$ will be referred to as $\prec_P$. If $v,v' \in P$, then we denote by $(v,v')$ and $[v,v']$ the open and closed intervals with respect to $\prec$, and by $[v,v+\omega)$ the ray of $P$ starting at $v$ compatible with the path order. Note that a one-way infinite ray can be viewed quite naturally as a generalised path of order type $\omega$, and conversely, every generalised path of order type $\omega$ contains a spanning one-way infinite ray. Thus, partitioning a graph into monochromatic generalised path of order type $\omega$ is equivalent to partitioning it into monochromatic rays.

From now on, the term \emph{path} is used in the extended sense of a generalised path.

Elekes, Soukup, Soukup and Szentmikl\`{o}ssy \cite{ESSS17} have recently answered a special case of Rado's question for $\aleph_1$-sized complete graphs and two colours in the affirmative. Shortly after, Soukup \cite{S17} gave a complete answer to Rado's question for any finite number of colours and complete graphs of arbitrary infinite cardinality.

\begin{restatable}[Soukup, {\cite[Theorem 7.1]{S17}}]{theorem}{CompleteCase}\label{thm: decomposition thm for complete graph}
Let $r$ be a positive integer. Every $r$-edge-coloured complete graph of infinite order can be partitioned into monochromatic generalised paths of different colours. 
\end{restatable} 

In \cite[Conjecture 8.1]{S17}, Soukup conjectures that a similar result holds for complete bipartite graphs, namely that every $r$-coloured complete bipartite graph with bipartition classes of cardinality $\kappa\ge \aleph_0$ can be partitioned into $2r-1$ monochromatic generalised paths, and has proven his conjecture in the countable case $\kappa = \aleph_0$ \cite[Theorem 2.4.1]{S15}. If true, this bound would be best possible in the sense that there are $r$-colourings of $K_{\kappa,\kappa}$ for which the graph cannot be partitioned into $2r-2$ monochromatic paths, see \cite[Theorem 2.4.1]{S15}.

We remark that Soukup's conjecture is inspired by the corresponding conjecture in the finite case, due to Prokovskiy \cite[Conjecture 4.5]{P14}. In contrast to the infinite case, the finite conjecture is only known for two colours \cite[p. 169 (footnote)]{GL73}.

The main result of this paper is to prove Soukup's conjecture for all uncountably cardinalities and any (finite) number of colours.

\begin{restatable}{theorem}{MainResult}\label{thm: main result}
Let $r$ be a positive integer. Every $r$-edge-coloured complete bipartite graph with bipartition classes of the same infinite cardinality can be partitioned into $2r-1$ monochromatic generalised paths with each colour being used at most twice. 
\end{restatable}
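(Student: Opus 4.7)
The natural plan is induction on the number of colours $r$. For the base case $r=1$, $K_{A,B}$ is monochromatic: fix bijective enumerations $A = \{a_\alpha : \alpha < \kappa\}$ and $B = \{b_\alpha : \alpha < \kappa\}$ and interleave them into a single well-order of $A\cup B$ (placing $a_\alpha$ immediately before $b_\alpha$). In this order every vertex sees cofinally many down-neighbours on the opposite bipartition side, so $K_{A,B}$ is itself a single generalised path, giving the required $2r-1=1$ path.

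For the inductive step, assume the theorem holds for $(r-1)$-colourings. Given an $r$-colouring of $K_{A,B}$, the plan is to peel off a single colour $c^*$ by constructing two disjoint generalised $c^*$-paths $P_1, P_2$ such that $V(K_{A,B}) \setminus (V(P_1)\cup V(P_2))$ contains a balanced complete bipartite subgraph $K_{A', B'}$ with $|A'|=|B'|=\kappa$ on which the colour $c^*$ does not appear. The induction hypothesis applied to the $(r-1)$-coloured $K_{A', B'}$ then yields $2(r-1)-1=2r-3$ further monochromatic paths, each remaining colour used at most twice, and together with $P_1, P_2$ we obtain a partition of $V(K_{A,B})$ into $2r-1$ paths with $c^*$ used exactly twice and every other colour at most twice.

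The colour $c^*$ should be located by a double pigeonhole: every vertex has $\kappa$ neighbours in at least one colour, so some single colour $c^*$ plays this role for $\kappa$-many vertices on each side of the bipartition, and this density at many vertices is what makes the absorbing construction possible. The construction of $P_1, P_2$ is then a transfinite recursion of length $\kappa$ that simultaneously tracks (i) the two partial generalised paths together with their cofinal-down-neighbour conditions at every limit ordinal of the path order, (ii) a shrinking reservoir $A'\cup B'$ with $|A'|=|B'|=\kappa$ containing no $c^*$-edges across the bipartition, and (iii) the absorption into $V(P_1)\cup V(P_2)$ of every vertex whose local $c^*$-behaviour would otherwise obstruct the reservoir.

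The main obstacle is the limit-stage analysis. Whenever the path order on $P_i$ reaches a limit ordinal, the cofinal-down-neighbour condition forces us to produce cofinally many $c^*$-neighbours of the limit vertex in the opposite bipartition class, arranged coherently with the still-shrinking reservoir and respecting the bipartite constraint that consecutive path entries alternate between $A$ and $B$. This is technically more restrictive than the corresponding limit-stage construction in Soukup's proof of the complete-graph case, Theorem \ref{thm: decomposition thm for complete graph}, whose techniques must be carefully adapted to the bipartite setting where each cofinal sequence of neighbours is forced to live on a single prescribed side.
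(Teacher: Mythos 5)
Your base case is fine, but the inductive step rests on a reduction that is impossible in general: you ask for two monochromatic $c^*$-paths $P_1,P_2$ whose complement is a balanced complete bipartite graph $K_{A',B'}$ with $|A'|=|B'|=\kappa$ containing \emph{no} $c^*$-edges. No such reservoir need exist, for any choice of $c^*$. Consider the $2$-colouring of $K_{\kappa,\kappa}$ with classes $A=\{a_\alpha\colon\alpha<\kappa\}$, $B=\{b_\beta\colon\beta<\kappa\}$ given by $c(a_\alpha b_\beta)=1$ if $\alpha<\beta$ and $c(a_\alpha b_\beta)=2$ otherwise. Since $\kappa$ is a cardinal, every $A'\in[A]^\kappa$ and $B'\in[B]^\kappa$ index unbounded subsets of $\kappa$, so $K_{A',B'}$ contains a pair with $\alpha<\beta$ and a pair with $\alpha\geq\beta$, hence edges of both colours. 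Thus neither colour can be ``cleaned out'' of any $\kappa\times\kappa$ complete bipartite subgraph, and the induction on $r$ cannot get off the ground. (This half-graph is exactly the graph $H_{\kappa,\kappa}$ around which Soukup's machinery, quoted in Section~\ref{sec_final}, is built; it is the canonical obstruction to this kind of colour-peeling.) There is a second, independent problem even granting a reservoir: you also need $P_1\cup P_2$ to cover \emph{exactly} the complement of $A'\cup B'$, but a vertex with fewer than two $c^*$-neighbours cannot lie on a nontrivial $c^*$-path and so must be pushed into the reservoir, in tension with the requirement that the reservoir avoid $c^*$ and stay balanced of full size.

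The actual proof does not induct on $r$ and uses colour $1$ for a \emph{single} path, arriving at $2r-1$ as $1+(r-1)+(r-1)$ rather than $2+(2r-3)$. All colours are analysed simultaneously: a uniform ultrafilter (Lemma~\ref{lemma: ultrafilter}) and Soukup's $H_{\kappa,\kappa}$-analysis produce one colour, say $1$, a monochromatic path $P$ of order type $\kappa$, and a large $\kappa$-star-linked set (Lemma~\ref{lemma: a path and a star-linked set}); Lemma~\ref{lemma: constructing robust paths} upgrades $P$ to an $X$-robust path $Q$ absorbing a ${<}\kappa$-inseparable set; the two leftover pieces are each covered by $r-1$ paths in colours $2,\dots,r$ via Theorem~\ref{lemma: covering bipartition classes}; and finally the vertices of $X$ consumed by those paths are deleted from $Q$ using robustness. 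Your closing paragraph locates the difficulty at the limit stages of a transfinite construction of $P_1,P_2$, but the obstruction is upstream of that: the object you plan to recurse on does not exist.
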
 

The first uncountable case of Theorem~\ref{thm: main result}, where the bipartition classes have size $\aleph_1$, was proved by the first author in his Master's thesis \cite{C}. In this paper, we extend these ideas to give a proof for all uncountable cardinalities.

Our proof relies on the methods developed by Soukup in his original paper \cite{S17}. However, we re-introduce in this paper the new, helpful notion of \emph{$X$-robust} paths from \cite{C}---generalised paths which are resistant against the deletion of vertices from $X$. After introducing such paths, we will state in Section~\ref{sec2} three high-level results relying on this new notion, and then provide a proof of Theorem~\ref{thm: main result} from these auxiliary results. In fact, our discussion will also lead to a new, conceptually simpler closing argument for a proof of Soukup's Theorem~\ref{thm: decomposition thm for complete graph}.

In Sections~\ref{sec3} and \ref{sec4}, we then provide proofs of the auxiliary results. For the second of these auxiliary results, to be proved in Section~\ref{sec3}, we need to strengthen a  result by Soukup \cite[\S5]{S17} to give the statement that any edge-coloured complete bipartite graph with bipartition classes $(A,B)$ of cardinality $\kappa>\aleph_0$ contains a monochromatic path $P$ of order type $\kappa$ in colour $k$ (say) covering a large subsets $X \subseteq A$ which itself is $\kappa$-star-linked in colour $k$, where it is precisely the \emph{$\kappa$-star-linked-property} (to be defined below) which is new. We remark that while our statement is slightly stronger, our proof very much relies on Soukup's proof \cite[\S5]{S17} and does not give an independent proof of Soukup's result. A discussion how one obtains the strengthened version of Soukup's result is provided in Section~\ref{sec_final}.

Finally, in Section~\ref{sec4} we prove our third auxiliary result. This part contains a crucial new idea how to directly construct an $X$-robust path $Q$ of order type $\kappa>\aleph_0$ with $X\in [V(Q)]^\kappa$ from a given generalised path $P$ with the star-linked property as above, using nothing but countable combinatorics and avoiding intricate set theoretical arguments using elementary submodels as employed in \cite{S17} and \cite{C}. 

\subsection*{Notation} For graph theoretic notation we follow the text book \emph{Graph Theory} \cite{D} by Diestel. For a natural number $n\in \N$ we write $[n] = \{1,2,\ldots, n \}$ and if $m\le n$, we write $[m,n]=\{m,m+1,\dots,n\}$. 
  
Let $G=(V,E)$ be a graph, $r\ge 1$ and $k\in [r]$. An \emph{$r$-edge-colouring} (or simply $r$-\emph{colouring}) of $G$ is a map $c\colon E\rightarrow [r]$. A path $P\subseteq G$ is \emph{monochromatic} (in colour $k$ with regard to the colouring $c$) if $P$ is also a path in the graph induced by the edges of colour $k$, i.e. if $P$ is a path in $G[\bigcup c^{-1}(k)]$. More generally, suppose that $\mathcal{P}$ is a graph property. We say that $G$ \emph{has property $\mathcal{P}$ in colour $k$} if $G[\bigcup c^{-1}(k)]$ has property $\mathcal{P}$. For a vertex $v$ of $G$ we write $N(v,k)$ for the neighbourhood of $v$ in $G[\bigcup c^{-1}(k)]$. As a shorthand, we also write $N(v,{\neq}k) := N(v) \setminus N(v,k)$ for the neighbourhood of $v$ in all colours but $k$. Let $A \subseteq V$. The common neighbourhood $\bigcap\{N(v)\colon v\in A\}$ of vertices in $A$ is written as $N[A]$. The common neighbourhood of $A$ in colour $k$ is written as $N[A,k]$. For a cardinal $\kappa$, we say that $A$ is  \emph{$\kappa$-star-linked in $B$}, if $N[F]\cap B$ has cardinality $\kappa$ for every finite $F\subseteq A$. In the case where $B=V(G)$ we simply say that $A$ is $\kappa$-star-linked. 

When talking about \emph{partitions of $G$} we always mean vertex partitions and we allow empty partition classes. If $A,B\subseteq V(G)$ are disjoint sets of vertices, then $G[A,B]$ denotes the bipartite graph on $A \cup B$ given by all the edges between $A$ and~$B$.

For a set $X$, we write $[X]^\kappa = \set{Y \subseteq X}:{|Y| = \kappa}$ and $[X]^{{<}\kappa} = \set{Y \subseteq X}:{|Y| < \kappa}$.


\section{A high-level proof of the main result}
\label{sec2}

The aim of this section is to give an overview of the proof of Theorem \ref{thm: main result}. We shall start with a rough idea, inspired by Soukup's work in \cite[Theorem 7.1]{S17}. After that, we present three main ingredients for our proof of Theorem \ref{thm: main result}: Lemma \ref{lemma: covering bipartition classes}, Lemma \ref{lemma: a path and a star-linked set} and Lemma \ref{lemma: constructing robust paths}. For the moment, we will skip the latter two and discuss them below in Section 3 and Section 4. We conclude this section with a proof of Theorem \ref{thm: main result} and a proof of Theorem \ref{thm: decomposition thm for complete graph}---also based on the three lemmas.

\subsection{A rough outline} First, let us have a look at an important idea in Soukup's proof of Theorem~\ref{thm: decomposition thm for complete graph}. In \cite[Lemma 4.6]{S17}, Soukup provides some \emph{conditions} 
which guarantee the existence of a spanning generalised path in a graph. Let us refer to these conditions by $(\dagger)$. Let $\kappa$ be an infinite cardinal and $G=(V,E)$ the complete graph of order $\kappa$. Suppose that the edges of $G$ are coloured with $r\ge 1$ many colours. In \cite[Claim~7.1.2]{S17}, Soukup shows that one can find sets $X \subseteq W\subseteq V$ and a colour $k\in [r]$, such that \begin{enumerate}
\item $G[W\setminus X']$ satisfies $(\dagger)$ in colour $k$ for every $X'\subseteq X$, and 
\item $V\setminus W$ is covered by disjoint monochromatic paths of different colours not equal to $k$ in the graph $G[V\setminus W,X]$.
\end{enumerate}
Once such $W,X$ and $k$ are found, we just have to find $r-1$ disjoint monochromatic paths of different colours $\neq k$ covering $V\setminus W$ in $G[V\setminus W,X]$ as in (2), let $X' \subset X$ be the vertices of $X$ covered by these $r-1$ paths, and apply (1) to guarantee the existence of a monochromatic path in colour~$k$ disjoint from all previous ones and covering the remaining vertices. 

Whilst it is difficult to work with the conditions from $(\dagger)$ in the bipartite setting directly, the use of $(\dagger)$ in (1) motivates the following definition: 

\begin{definition}
Let $P$ be a path and $X\subseteq V(P)$. We say that $P$ is \emph{$X$-robust} iff for every $X'\subseteq X$ the graph $P-X'$ admits a well-order for which $P-X'$ is a path of the same order type as $P$. 
\end{definition} 

Our strategy for the bipartite case can now by summarised as follows. Let $\kappa$ be an infinite cardinal and $G=(V,E)$ the complete bipartite graph with bipartition classes of cardinality $\kappa$, where the edges of $G$ are coloured with $r\ge 1$ many colours. Assume that we find $X \subseteq W\subseteq V$ and a colour $k\in [r]$, such that 
\begin{itemize}
\item[$(1')$] $G[W]$ has a spanning $X$-robust path in colour $k$, and
\item[$(2')$] $V\setminus W$ is covered by $2r-2$ disjoint monochromatic paths in the graph $G[V\setminus W,X]$ in colours not equal to $k$ with every colour appearing at most twice.
\end{itemize}
Then it is clear that we can complete a proof of Theorem \ref{thm: main result} in a similar way as above.

\subsection{The three ingredients} 
To prove our main theorem, we shall need the following three ingredients.

\begin{theorem}[{Soukup, \cite[Thm~6.2]{S17}}]\label{lemma: covering bipartition classes}
Let $G$ be an infinite bipartite graph with bipartition classes $A$ and $B$. Suppose that $|A|\le |B|$ and that $|B\setminus N(a)|<|B|$ for every vertex $a\in A$. Then for every finite edge colouring of $G$, there are disjoint monochromatic paths of different colours in $G$ covering $A$. 
\end{theorem}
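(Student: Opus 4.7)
The plan is to prove Theorem~\ref{lemma: covering bipartition classes} by induction on the number $r$ of colours; the base case $r=1$ carries the main technical content, and the inductive step reduces to it via a colour-partitioning argument.

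For the base case, the task is to exhibit a single generalised path of $G$ covering $A$. Fix $\lambda := |A|$, enumerate $A = \{a_\alpha : \alpha < \lambda\}$, and construct by transfinite recursion on $\alpha < \lambda$ a generalised path whose path order places $a_\beta$ at position $2\beta$ and an auxiliary vertex $b_\beta \in B$ at position $2\beta + 1$, yielding a final path of order type $\lambda$. At a successor stage $\alpha = \beta + 1$, choose $b_\beta \in N(a_\beta) \cap N(a_{\beta+1})$ outside the $<|B|$ vertices of $B$ used so far; the hypothesis $|B \setminus N(a)| < |B|$ forces $|N(a_\beta) \cap N(a_{\beta+1})| = |B|$, so such a $b_\beta$ exists. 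At a limit stage $\alpha$, the vertex $a_\alpha$ is placed at the limit position $2\alpha = \alpha$, and its down-neighbourhood $\{b_\beta : \beta < \alpha,\ b_\beta \in N(a_\alpha)\}$ must be cofinal in $\alpha$. To secure this, I would preselect for every limit $\alpha < \lambda$ a cofinal sequence $C_\alpha \subseteq \alpha$ of order type $\cf(\alpha)$, and strengthen the recursion so that $b_\beta \in N(a_\alpha)$ is additionally required whenever $\beta \in C_\alpha$. A coherent a priori choice of the $C_\alpha$'s, arranged so that each $\beta < \lambda$ lies in fewer than $|B|$ of the $C_\alpha$'s, keeps the number of extra membership constraints on $b_\beta$ below $|B|$; since each individual constraint excludes only $<|B|$ candidates from $B$, the eligible pool for $b_\beta$ still has size $|B|$.

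For the inductive step, I would single out a colour $k^*$ and a set $A^* \subseteq A$ with $|B \setminus N(a, k^*)| < |B|$ for every $a \in A^*$, split $B$ as a disjoint union $B = B^* \sqcup B'$ with $|B^*| = |B'| = |B|$, cover $A^*$ by a single colour-$k^*$ generalised path in $G[A^* \cup B^*]$ via the base case, and apply the induction hypothesis to the bipartite graph $G[(A \setminus A^*) \cup B']$ with only the remaining $r-1$ colours retained. For this to work, the split must be arranged so that, on $B'$ and in the retained colours, every $a \in A \setminus A^*$ still satisfies $|B' \setminus N(a)| < |B'|$; achieving this requires choosing $B'$ to avoid the bulk of $N(a, k^*)$ for each $a \in A \setminus A^*$ whose colour-$k^*$ neighbourhood would otherwise dominate $B'$, which is possible precisely because $A^*$ was chosen to collect all vertices whose colour-$k^*$ neighbourhood has full codimension $<|B|$.

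The principal obstacle is the cofinality requirement at limit stages of the base construction: the bound $|B \setminus N(a_\alpha)| < |B|$ only tells us that most previously placed $b_\beta$'s are adjacent to $a_\alpha$, not that the good ones lie cofinally below $a_\alpha$ in the path order. The coherent preselection of the cofinal sequences $C_\alpha$, together with the verification that the many simultaneous constraints imposed at each stage $\beta$ remain jointly satisfiable within the available cardinal-arithmetic margin, is the combinatorial heart of the argument.
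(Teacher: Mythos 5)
Your plan breaks down at the limit stages of the base case, and the proposed repair---preselecting cofinal sets $C_\alpha\subseteq\alpha$ so that each $\beta$ lies in fewer than $|B|$ of them---is provably impossible in the main case of interest. Suppose $\lambda=|A|=|B|$ is regular and uncountable. The map $\alpha\mapsto\min C_\alpha$ is regressive on the club of limit ordinals below $\lambda$, so by Fodor's pressing-down lemma it is constant, say with value $\beta_0$, on a stationary set $S$ of size $\lambda$; hence $\beta_0$ lies in $|B|$ many of the $C_\alpha$, no matter how the ladders are chosen. Worse, the resulting constraint on $b_{\beta_0}$ can be unsatisfiable: in the half-graph on $A=B=\omega_1$ in which $a_\xi$ is adjacent to $b_\zeta$ exactly when $\zeta\ge\xi$ (a one-coloured graph satisfying the hypothesis, since $|B\setminus N(a_\xi)|=|\xi|<\omega_1$), the vertex $b_{\beta_0}$ would have to lie in $\bigcap_{\alpha\in S}N(a_\alpha)=\set{b_\zeta}:{\zeta\ge\sup S}=\emptyset$, because stationary sets are unbounded; and no re-enumeration of $A$ helps, since every uncountable subset of $A$ has empty common neighbourhood in this graph. (A secondary flaw: even ``fewer than $|B|$ constraints, each excluding fewer than $|B|$ candidates'' need not leave any candidate when $|B|$ is singular; one needs the number of constraints below $\cf(|B|)$.)

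The underlying misstep is structural: your layout forces vertices of $A$ into every limit position of the path, whereas the path must be \emph{concentrated} on $A$, i.e.\ have all its limits in $B$ (cf.\ Lemma~\ref{lemma: char. concentrated}). A $B$-vertex at a limit position can be chosen \emph{after} one sees which $A$-vertices lie cofinally below it, and this is where the hypothesis $|B\setminus N(a)|<|B|$ actually earns its keep; making the selection work at limits of every cofinality is precisely what Soukup's machinery ($\spadesuit_\kappa$, nice sequences of elementary submodels, Lemma~\ref{lem_constructingpaths46}) is for. Note that the present paper does not reprove the statement at all---it is quoted directly from \cite[Theorem~6.2]{S17}---so a genuinely self-contained proof would have to reproduce that argument. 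Your inductive step has a related soft spot as well: for $a\in A\setminus A^*$ the set $N(a,k^*)$ may itself have size $|B|$, and $|B|$ many requirements of the form ``$B'$ avoids the bulk of $N(a,k^*)$'' need not be simultaneously satisfiable; but the limit-stage problem is already fatal.
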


\begin{proof} By the argument in \cite[p. 271, l. 17-20]{S17}, the theorem follows from \cite[Theorem 6.2]{S17}. \end{proof}

The next main lemma, which is a strengthening of a similar result by Soukup \cite[\S5]{S17}, helps to find a monochromatic path $P$ which has some desirable additional properties. 

\begin{restatable}{lemma}{SecondMainLemma}\label{lemma: a path and a star-linked set}
 Let $\kappa$ be an infinite cardinal and $G$ the complete bipartite graph with bipartition classes $A,B$ both of cardinality $\kappa$. Suppose that $c\colon E(G)\rightarrow [r]$ is a colouring of $G$ with $r\ge 1$ many colours. Then there are disjoint sets $A_1,A_2\in[A]^\kappa$, $B_1,B_2\in [B]^\kappa$ such that (up to renaming the colours): \begin{itemize}
 \item $G[A_1,B_1]$  has a spanning path $P$ of order type $\kappa$ in colour~$1$ all whose limits are contained in $B_1$, and 
 \item $A_1\sqcup A_2$ is $\kappa$-star-linked in $B_2$ in colour $1$. (cf. Figure \ref{fig: A path and a star-linked set})
 \end{itemize} 
\end{restatable}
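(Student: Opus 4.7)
The plan is to build upon Soukup's construction in \cite[\S5]{S17}, tracing through it to extract the additional $\kappa$-star-linked structure $(A_1 \sqcup A_2, B_2)$ alongside the monochromatic spanning path $P$ on $A_1 \cup B_1$. The essential new observation is that Soukup's argument, by its very nature, already produces sufficient neighbourhood density to yield the star-linked property, provided one is willing to track it along the construction.

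My first step is to find, by a pigeonhole followed by iterative thinning, a colour $k \in [r]$ (renamed to $1$) together with sets $\widetilde A \in [A]^\kappa$, $\widetilde B \in [B]^\kappa$ such that $\widetilde A$ is $\kappa$-star-linked in $\widetilde B$ in colour~$1$; that is, $|N[F,1] \cap \widetilde B| = \kappa$ for every finite $F \subseteq \widetilde A$. I would construct $(\widetilde A, \widetilde B)$ via an elementary substructure chain of length $\kappa$ (as used in \cite[\S5]{S17}), adding vertices of $A$ to $\widetilde A$ one at a time and thinning $\widetilde B$ to preserve the finite-intersection property in colour~$1$. If the process fails for the chosen colour (because common neighbourhoods drop below~$\kappa$), one restarts with a different colour; since $r$ is finite, the procedure stabilises on some colour.

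Next I would split: partition $\widetilde A = A_1 \sqcup A_2$ into two disjoint subsets of size $\kappa$, and choose $B_2 \in [\widetilde B]^\kappa$ so that $A_1 \sqcup A_2$ is still $\kappa$-star-linked in $B_2$ in colour~$1$ (which can be arranged by a $\kappa$-length diagonal enumeration of the finite subsets of $\widetilde A$); then set $B_1 := \widetilde B \setminus B_2$, which has size $\kappa$ for regular $\kappa$ and requires a cofinality-aware splitting in the singular case. Finally, on the bipartite graph $G[A_1, B_1]$ I would build the desired spanning path $P$ of order type $\kappa$ in colour~$1$ with all limits in $B_1$ by running Soukup's construction from \cite[\S5]{S17}: at each successor stage the star-linked property ensures $\kappa$ eligible colour-$1$ neighbours of the current finite ``tail'' to extend by, and at each limit stage enough cofinal colour-$1$ vertices of $B_1$ are available to close up the path on the $B$-side.

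The main obstacle is the first stage, namely forcing the finite-intersection property for \emph{all} finite $F \subseteq \widetilde A$ simultaneously. It is easy to ensure that individual vertices have $\kappa$-many colour-$1$ neighbours, but propagating this to all finite subsets is delicate and is precisely where Soukup's elementary-submodel machinery from \cite[\S5]{S17} becomes indispensable; my contribution is to observe that his construction, with modest modifications, already yields the star-linked set alongside the path. A secondary obstacle is the cofinality-sensitive splitting in Stage~2 when $\kappa$ is singular, where a naive halving of $\widetilde B$ could destroy the star-linked property and must be replaced by a partition that respects the structure at each cofinality below~$\kappa$.
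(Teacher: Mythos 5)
There is a genuine gap, concentrated in your third stage. Having fixed $A_1$ and $B_1$ in advance, you claim that the $\kappa$-star-linked property lets you run ``Soukup's construction'' in colour~$1$ on $G[A_1,B_1]$ to produce a spanning path of order type $\kappa$ with limits in $B_1$. This fails for two reasons. First, star-linkedness only controls common neighbourhoods of \emph{finite} sets: at a limit position of the path one needs a vertex of $B_1$ whose colour-$1$ neighbourhood is cofinal in the (infinite) initial segment built so far, and a decreasing $\omega$-sequence of $\kappa$-sized common neighbourhoods can have empty intersection, so no such vertex need exist. This is precisely why Soukup's machinery requires the property $\spadesuit_\kappa$, concentrated paths, and the elementary-submodel condition of \cite[Lemma~4.6]{S17}, none of which follow from star-linkedness alone (indeed \cite[Lemma~5.9]{S17} is a genuine dichotomy: star-linkedness in a colour may hold while $\spadesuit_\kappa$ in that colour fails). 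Second, Soukup's theorem produces a long monochromatic path in \emph{some} colour of its own choosing; it does not let you prescribe colour~$1$. Your proposal never reconciles the colour in which the path can be built with the colour in which the star-linked property holds.

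The paper's proof handles both points differently. It first fixes $A'\in[A]^\kappa$ that is $\kappa$-star-linked in a \emph{maximal} set of colours $I$ (via the ultrafilter lemma), splits off two star-linked targets $B_1',B_2'$, and only then applies the heavy Theorem~\ref{thm: find large path} to the complete bipartite (hence type $H_{\kappa,\kappa}$) graph $G[A_1',B_1']$. That theorem returns a colour $k$, a $\kappa$-star-linked set $X$ in colour $k$, and a path $P$ covering $X$ concentrated on it; the maximality of $I$ forces $k\in I$, which is the step your argument is missing. Finally $A_1$ and $B_1$ are defined \emph{a posteriori} as $A\cap P$ and $B\cap P$, so the path is spanning by fiat and the star-linked property in $B_2:=B_2'$ is inherited from $A'$. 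Your quantifier order --- fix $B_1$ first, then demand a spanning path on it --- cannot work, since a generic $\kappa$-sized $B_1$ will contain vertices with too few colour-$1$ neighbours in $A_1$ to lie on any colour-$1$ generalised path of $G[A_1,B_1]$.
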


\begin{figure}[t]
\begin{center}
\def\svgwidth{6cm}
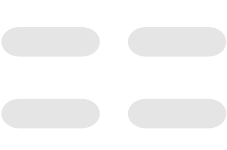
\end{center}
\vspace{0,5cm}
\caption{The colour $1$ is indicated blue in the figure. }
\label{fig: A path and a star-linked set}
\end{figure}

Our final ingredient converts the path $P$ from above into a new path $Q$ that has two additional properties: first, $Q$ will be $X$-robust for some large $X$, and secondly, $Q$ will be able to additionally cover certain highly inseparable sets of vertices.   

\begin{definition}[{cf. Diestel, \cite[ p. 354]{D}}]
Let $G$ be a graph and $\kappa$ a cardinal. A set $U\subseteq V(G)$ of vertices is \emph{${<}\kappa$-inseparable} if distinct vertices $v,w\in U$ cannot be separated by less than $\kappa$ many vertices, i.e. $v$ and $w$ are contained in the same component of $G-W$ for every $W\in [V(G)\setminus\{v,w\}]^{<\kappa}$. 
\end{definition}

\begin{restatable}{lemma}{ThirdMainLemma}\label{lemma: constructing robust paths}
Let $\kappa$ be an uncountable cardinal and $G$ a bipartite graph with bipartition classes $A,B$ both of size $\kappa$. Suppose there are disjoint sets $A_1,A_2 \in [A]^\kappa$, $B_1,B_2\in [B]^\kappa$ such that \begin{itemize}
\item $G[A_1,B_1]$  has a spanning path $P$ of order type $\kappa$ with $\Lambda(P) \subset B_1$, and 
 \item $A_1\sqcup A_2$ is $\kappa$-star-linked in $B_2$.
\end{itemize}  
Then there is a set $X\in [A_2]^\kappa$ and an $X$-robust path $Q$ covering $A_1\sqcup A_2$ with $\Lambda(P) = \Lambda(Q)$. Moreover, if $C\subseteq (A\setminus A_1)\sqcup (B\setminus \Lambda)$ covers $A_2$ and is ${<}\kappa$-inseparable in $G[A\setminus A_1, B\setminus \Lambda]$, then $Q$ can be chosen to cover $C$. 
\end{restatable}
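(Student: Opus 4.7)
The plan is to construct $Q$ by transfinite recursion of length $\kappa$, building $Q$ segment by segment between consecutive limits of $P$. I would enumerate $\Lambda = \Lambda(P) = \{\lambda_\eta : \eta < \kappa\}$ in $P$'s path order and, for each $\eta$, let $D_\eta := A_1 \cap (\lambda_\eta, \lambda_{\eta+1})$ be the countable set of $A_1$-vertices of $P$ strictly between consecutive limits; the $D_\eta$'s partition $A_1$, and each $D_\eta$ already contains cofinally many $P$-down-neighbours of $\lambda_{\eta+1}$. I would also partition $A_2 = \bigsqcup_{\eta<\kappa} \Sigma_\eta$ into countable pieces, split each $\Sigma_\eta = X_\eta \sqcup Y_\eta$ into two countable halves, and set $X = \bigsqcup_\eta X_\eta \in [A_2]^\kappa$. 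For the moreover part, I would further partition $C$ into countable pieces $C_\eta$ with $\Sigma_\eta \subseteq C_\eta$.

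For each $\eta$, I would build $T_\eta$ as an alternating bipartite sequence of order type $\omega$, $a_0^{(\eta)}, b_0^{(\eta)}, a_1^{(\eta)}, b_1^{(\eta)}, \ldots$, converging to $\lambda_{\eta+1}$. The $A$-vertices of $T_\eta$ enumerate $D_\eta \sqcup \Sigma_\eta$, with $a_0^{(\eta)}$ chosen as the immediate $P$-successor of $\lambda_\eta$ in $D_\eta$ (the edge $\lambda_\eta a_0^{(\eta)}$ coming from $P$), and with the $D_\eta$-elements placed cofinally so that $\lambda_{\eta+1}$ inherits its $P$-down-neighbourhood as cofinal down-neighbours in $Q$. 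Each $B$-connector $b_i^{(\eta)}$ would be picked inductively from $N[\{a_0^{(\eta)}, \ldots, a_{i+1}^{(\eta)}\}] \cap B_2$---a set of size $\kappa$ by the star-linked hypothesis---avoiding all previously used connectors. This \emph{staircase} choice guarantees that $b_i^{(\eta)}$ is adjacent in $G$ to every earlier $A$-vertex of $T_\eta$, which is the key redundancy underlying robustness. For the moreover part, the $<\kappa$-inseparability of $C$ in $G[A\setminus A_1, B\setminus \Lambda]$ would be used at each finite stage of this countable construction: since only finitely many vertices have been committed so far, no finite avoidance set can separate the next desired $C$-vertex from the growing backbone, so I can always find a short $C$-internal bridge path to insert.

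To verify $X$-robustness, I would fix any $X' \subseteq X$ and argue segment by segment. Every $B_2$-connector of $T_\eta$ survives in $T_\eta \setminus X'$ (since $X \subseteq A_2$), while the surviving $A$-vertices comprise $D_\eta \sqcup (\Sigma_\eta \setminus X')$. Using the staircase adjacency, a countable combinatorial argument would produce a spanning generalised path of $T_\eta \setminus X'$ of some countable order type converging to $\lambda_{\eta+1}$: list the surviving $A$-vertices in their original order and thread them with $B_2$-connectors of sufficiently large index (each adjacent to both neighbouring $A$-vertices by the staircase), absorbing any surplus $B_2$-connectors as cofinal down-neighbours of a few small internal limits that the staircase adjacency supports. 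Concatenating these countable segment paths across $\eta$ then yields a spanning generalised path of $Q - X'$ of order type $\kappa$, establishing $X$-robustness.

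The main obstacle will be the segmentwise re-weaving in the robustness verification: after arbitrary $X_\eta$-deletions, the residual bipartite graph on $T_\eta \setminus X'$ must admit a spanning generalised path, and the deletion pattern could leave many orphaned $B_2$-connectors to reintegrate. The staircase design is tailored precisely for this, since every surviving $B_2$-connector retains large $A$-neighbourhoods no matter which $A_2$-vertices are removed, so a greedy countable construction of the new segment-ray never dead-ends. The moreover clause adds bookkeeping for $C$-vertices but introduces no fundamentally new difficulty, as the $<\kappa$-inseparability of $C$ is consumed one vertex at a time during the countable construction of each $T_\eta$.
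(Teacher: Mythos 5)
Your overall architecture---segmenting $Q$ along the limits of $P$, drawing the connecting $B$-vertices of each countable segment from $B_2$ via the $\kappa$-star-linkedness of $A_1\sqcup A_2$, and spending the ${<}\kappa$-inseparability of $C$ on finite bridge paths---is the same as the paper's. The genuine gap is in the robustness verification. You put countably many deletable vertices $X_\eta\subseteq A_2$ into each segment $T_\eta$ and claim that the staircase choice $b_i^{(\eta)}\in N[\{a_0^{(\eta)},\dots,a_{i+1}^{(\eta)}\}]$ allows $T_\eta\setminus X'$ to be re-threaded. It does not: the \emph{guaranteed} $A$-neighbourhood of $b_i^{(\eta)}$ is the finite set $\{a_0^{(\eta)},\dots,a_{i+1}^{(\eta)}\}$, so after deleting $X'\cap X_\eta$ a low-index connector can retain only one guaranteed neighbour (e.g.\ $b_0^{(\eta)}$ once $a_1^{(\eta)}$ is deleted). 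In a generalised path every vertex other than the first and last is adjacent to both its immediate predecessor and its immediate successor, and in a bipartite graph these are both $A$-vertices, so a connector of guaranteed degree one cannot be placed anywhere. Even if you arrange the enumeration so that every connector keeps two non-deletable neighbours, a counting obstruction remains: the connector threaded between consecutive surviving vertices $a_{j_k}$ and $a_{j_{k+1}}$ must have index at least $j_{k+1}-1$, so each deletion strictly decreases the number of slots admissible for $\{b_0,\dots,b_m\}$ while the number of these connectors stays the same; the stranded ones cannot be ``absorbed below an internal limit'' either, since two $B$-vertices are never adjacent and each would still need two surviving $A$-neighbours placed immediately around it. Your key claim that every surviving connector retains a large $A$-neighbourhood is exactly what the star-linked hypothesis does \emph{not} give you (only finite common neighbourhoods are large).

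The paper sidesteps this by making each segment robust against the deletion of only \emph{one} designated vertex $x_\alpha\in A_2$: each segment ends in a copy of the one-way infinite ladder (Lemma~\ref{lemma: construct robust fragment}), which is $\{2\}$-robust because $1,4,3,6,5,8,7,\dots$ re-threads the ladder minus the vertex $2$ while keeping the same first vertex. Taking $X=\{x_\alpha:\alpha<\kappa\}$ still has size $\kappa$, and an arbitrary $X'\subseteq X$ meets each segment in at most one vertex, so robustness is checked segmentwise with no matching problem; since every segment keeps order type $\omega$, any infinite subset of it is cofinal, which is what preserves the limit structure at each $\lambda_{\alpha+1}$. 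I recommend replacing your sets $X_\eta$ by a single designated vertex per segment and using the ladder gadget. One further small slip: at stage $\eta$ the set of already committed vertices has size $|\eta|\cdot\aleph_0<\kappa$ but is generally infinite, so the bridge to $c_\eta$ must avoid a set of size ${<}\kappa$, not a finite set---this is precisely why ${<}\kappa$-inseparability, rather than connectedness, is the right hypothesis on $C$.
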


\subsection{Combining the ingredients} Our three main ingredients can be applied to yield a proof of Theorem~\ref{thm: main result} as follows:

\MainResult*

\begin{proof}[Proof of Theorem \ref{thm: main result}]
Let $\kappa$ be an infinite cardinal and $G$ the complete bipartite graph with bipartition classes $A,B$ both of cardinality $\kappa$. Suppose that $c\colon E(G)\rightarrow [r]$ is a colouring of $G$. Since the countable case has been solved in \cite[Theorem 2.4.1]{S15} already, we may assume that $\kappa$ is uncountable. 

We will construct a partition $\mathcal{A}=\{A_1,\dots, A_4\}$ of $A$ and a partition $\mathcal{B}=\{B_1,\dots,B_4\}$ of $B$ such that, up to renaming the colours,
\begin{enumerate}
[label={\upshape(\roman{*})}]
\item\label{item1} $G[A_1,B_1]$ has a spanning path $P$ of order type $\kappa$ in colour~$1$ with $\Lambda(P) \subset B_1$, and $|A_2|=\kappa$,
 \item\label{item2} $A_1\sqcup A_2$ is $\kappa$-star-linked in $B_2$ in colour~$1$,
\item\label{item3} $A_2\sqcup A_3$ is ${<}\kappa$-inseparable in $G[A_2\sqcup A_3,B_2\sqcup B_3]$ in colour $1$, and
\item\label{item4} $A_4\sqcup B_4$ can be partitioned into $r-1$ monochromatic paths $P_2,\dots, P_{r}$ in $G[A_4,B_4]$ with distinct colours in $[2,r]$. 
\end{enumerate}

\begin{figure}[H]
\begin{center}
\def\svgwidth{12cm}
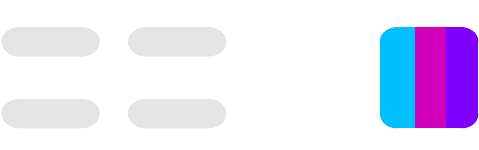
\end{center}
\vspace{0,5cm}
\caption{The colour $1$ is indicated blue in the figure. }
\label{fig: main proof 1}
\end{figure}

Let us first see how to complete the proof with these partitions established: 

Let $C$ be the set of vertices with $A\cap C= A_2\cup A_3$ and where $B\cap C$ consist of those vertices in $B\setminus (\Lambda \cup B_4)$ that send $\kappa$ many edges in colour~$1$ to $A_2$, and observe that \ref{item3} implies that $C$ is ${<}\kappa$-inseparable in $G[A \setminus (A_1 \cup A_4),B \setminus (\Lambda \cup B_4)]$ in colour~$1$. Hence, by \ref{item1} and \ref{item2}, we may apply Lemma~\ref{lemma: constructing robust paths} in $G[A\setminus A_4,B\setminus B_4]$ induced by the edge of colour $1$ in order to obtain a set $X\in [A_2]^\kappa$ and an $X$-robust, monochromatic path $Q$ in colour~$1$ with limits $\Lambda=\Lambda(Q)=\Lambda(P)$, covering $A_1 \cup A_2 \cup A_3 \cup C \cup \Lambda$. 

Next, note that since $X \subseteq A_2$, it follows by choice of $C$ that $|X \setminus N(b,{\neq}1)| = |X \cap N(b,1)| < \kappa = |X|$ for every vertex $b$ in $B \setminus ( Q \cup B_4)$. Therefore, we may apply Lemma~\ref{lemma: covering bipartition classes} to the bipartite graph $G[B\setminus (Q\sqcup B_4), X]$ with the edges in colour $\neq 1$ to obtain disjoint monochromatic paths $P_{r+1},\dots, P_{2r-1}$ with different colours in $[2,r]$ covering $B\setminus (Q\sqcup B_4)$. 

Let $P_1$ be the path that results by deleting $X' = X \cap \left(\bigcup_{i=r+1}^{2r-1} P_i \right)$ from $Q$, using that $Q$ is $X$-robust. Together with the paths $P_2,\ldots,P_r$ provided by \ref{item4}, we have found a partition of $G$ into $2r-1$ disjoint monochromatic paths $P_1,\ldots,P_{2r-1}$ using every colour at most twice.

To complete the proof, it remains to construct the partitions $\mathcal{A}$ and $\mathcal{B}$. 

\begin{claim} 
There are disjoint sets $A_1,A_2\in [A]^\kappa$ and $B_1,\tilde{B}_2\in [B]^\kappa$ such that (up to renaming the colours) \begin{itemize}
\item $G[A_1,B_1]$ has a spanning path $P$ of order type $\kappa$ in colour $1$ all whose limits are contained in $B_1$ and  
\item $A_1\sqcup A_2$ is $\kappa$-star-linked in $\tilde{B}_2$ in colour~$1$.
\end{itemize}
\end{claim}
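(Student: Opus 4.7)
The plan is to invoke Lemma~\ref{lemma: a path and a star-linked set} directly: the claim is essentially its verbatim conclusion, with the set there called $B_2$ renamed to $\tilde{B}_2$. This renaming is purely cosmetic---the symbol $B_2$ is being reserved for a different piece of the partition $\mathcal{B} = \{B_1, B_2, B_3, B_4\}$ of $B$ that the author is about to construct in the remainder of the proof of Theorem~\ref{thm: main result}.

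The single step is to verify that the hypotheses of Lemma~\ref{lemma: a path and a star-linked set} are met, which is immediate. The cardinal $\kappa$ is infinite (in fact uncountable, since the countable case has already been disposed of via \cite[Theorem 2.4.1]{S15}), $G$ is the complete bipartite graph with bipartition classes $A$ and $B$ both of cardinality $\kappa$, and $c\colon E(G)\to [r]$ is an $r$-edge-colouring with $r\geq 1$. Applying the lemma (and, if necessary, permuting colours so that the distinguished colour carrying the path is colour~$1$) produces disjoint sets $A_1, A_2 \in [A]^\kappa$ and $B_1, B_2 \in [B]^\kappa$ witnessing the two bullet points of the claim. Setting $\tilde{B}_2 := B_2$ finishes the argument.

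I do not expect any obstacle at this particular step; all the substantive work has been off-loaded into Lemma~\ref{lemma: a path and a star-linked set}, whose proof is deferred to Section~\ref{sec3} and which is the paper's strengthening of a result of Soukup from \cite[\S5]{S17}. The genuine difficulty in the surrounding proof of Theorem~\ref{thm: main result} will arise in the subsequent steps, where one must further split $A \setminus (A_1 \cup A_2)$ into the remaining pieces $A_3, A_4$ and $B \setminus (B_1 \cup \tilde{B}_2)$ into $B_2, B_3, B_4$ in such a way as to secure the ${<}\kappa$-inseparability condition~\ref{item3} and the finite-colour covering property~\ref{item4}, so that the final pieces of the construction can be assembled via Lemma~\ref{lemma: constructing robust paths} and Theorem~\ref{lemma: covering bipartition classes}.
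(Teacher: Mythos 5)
Your proposal is correct and matches the paper exactly: the paper's own proof of this claim is the one-liner ``Apply Lemma~\ref{lemma: a path and a star-linked set} to the graph $G$ and the colouring $c$,'' with the renaming of $B_2$ to $\tilde{B}_2$ implicit. Nothing further is needed.
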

\begin{proof}\renewcommand{\qedsymbol}{$\Diamond$}
Apply Lemma~\ref{lemma: a path and a star-linked set} to the graph $G$ and the colouring $c$.
\end{proof}

\begin{claim} There is a partition $\{B_2,\tilde{B}_3\}$ of $\tilde{B}_2$ such that \begin{itemize}
\item $A_1\sqcup A_2$ is $\kappa$-star-linked in $B_2$ in colour $1$ and 
\item $G[A_1\sqcup A_2,\tilde{B}_3]$ has a perfect matching in colour $1$. 
\end{itemize}
\end{claim}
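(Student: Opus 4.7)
The plan is to prove the claim by a single transfinite recursion of length $\kappa$ that simultaneously builds a perfect matching $M$ in colour $1$ between $A_1\sqcup A_2$ and a subset $\tilde{B}_3\subseteq \tilde{B}_2$, and a disjoint reservoir $R\subseteq \tilde{B}_2\setminus V(M)$ that will witness the surviving star-linked property. At the end of the recursion, I will set $\tilde{B}_3:=V(M)\cap \tilde{B}_2$ and $B_2:=\tilde{B}_2\setminus \tilde{B}_3\supseteq R$, and both bullet points will follow at once.

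To set up the recursion, fix enumerations $A_1\sqcup A_2=\{a_\alpha:\alpha<\kappa\}$ and, using $|[A_1\sqcup A_2]^{<\omega}|=\kappa$, an enumeration $\{F_\alpha:\alpha<\kappa\}$ of $[A_1\sqcup A_2]^{<\omega}$ in which every finite subset of $A_1\sqcup A_2$ appears $\kappa$ many times. At stage $\alpha$, suppose inductively that a matching $M_{<\alpha}$ and reservoir $R_{<\alpha}$ have been built with $|V(M_{<\alpha})\cup R_{<\alpha}|<\kappa$. The hypothesis that $A_1\sqcup A_2$ is $\kappa$-star-linked in $\tilde{B}_2$ in colour $1$ gives $|N(a_\alpha,1)\cap \tilde{B}_2|=|N[F_\alpha,1]\cap \tilde{B}_2|=\kappa$, so I can perform two actions in turn: first, if $a_\alpha$ is not yet covered by $M_{<\alpha}$, pick a fresh vertex $b\in N(a_\alpha,1)\cap \tilde{B}_2$ (disjoint from everything used so far) and add the edge $\{a_\alpha,b\}$ to $M$; second, pick a further fresh vertex $b'\in N[F_\alpha,1]\cap \tilde{B}_2$ (distinct from $b$) and add it to $R$. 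Each stage uses at most two new vertices, so the invariant $|V(M)\cup R|<\kappa$ persists throughout.

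After all $\kappa$ stages, define $\tilde{B}_3:=V(M)\cap \tilde{B}_2$ and $B_2:=\tilde{B}_2\setminus \tilde{B}_3$. Since the reservoir vertices were always chosen outside $V(M)$ and never became match-partners at later stages, we have $R\subseteq B_2$; as every finite $F\subseteq A_1\sqcup A_2$ occurs as $F_\alpha$ for $\kappa$ many indices $\alpha$, the set $B_2\cap N[F,1]$ contains at least $\kappa$ members of $R$, verifying the star-linked property. Meanwhile $M$ is a set of colour-$1$ edges covering every $a_\alpha$ (by the first action) and every vertex of $\tilde{B}_3$ (by the very definition of $\tilde{B}_3$), hence a perfect matching of $G[A_1\sqcup A_2,\tilde{B}_3]$ in colour $1$.

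This is a routine transfinite recursion with an easily preserved invariant, so I expect no serious obstacle; the only small point to check is that the two freshly chosen vertices $b$ and $b'$ at each stage can be taken distinct, which is immediate since both relevant neighbourhoods have size $\kappa$ while fewer than $\kappa$ vertices have been used.
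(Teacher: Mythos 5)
Your argument is correct. The underlying idea is the same as the paper's---simultaneously and greedily build the colour-$1$ matching and a disjoint ``reservoir'' witnessing that enough colour-$1$ common neighbours survive into $B_2$---but the bookkeeping differs: the paper runs the construction in $\cf(\kappa)$ blocks along an increasing cofinal sequence of regular cardinals $(\kappa_\alpha)_{\alpha<\cf(\kappa)}$, matching a whole set $A_\alpha$ of size $\kappa_\alpha$ and adding a star-witness $B''_\alpha$ at each step, whereas you run a single transfinite recursion of length $\kappa$ handling one vertex $a_\alpha$ and one finite set $F_\alpha$ per stage. Your version is, if anything, slightly cleaner: the only fact needed at stage $\alpha$ is that fewer than $\kappa$ vertices have been consumed while $|N(a_\alpha,1)\cap\tilde{B}_2|=|N[F_\alpha,1]\cap\tilde{B}_2|=\kappa$, and this works uniformly whether $\kappa$ is regular or singular, so the cofinal sequence of regular cardinals is not actually needed here. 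The two small points you flag (distinctness of $b$ and $b'$, and the invariant $|V(M_{<\alpha})\cup R_{<\alpha}|<\kappa$) are indeed immediate, and your verification that $R\cap V(M)=\emptyset$, hence $R\subseteq B_2$, closes the argument.
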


\begin{proof}\renewcommand{\qedsymbol}{$\Diamond$}
Take an increasing cofinal sequence of regular cardinals $(\kappa_\alpha)_{\alpha<\cf(\kappa)}$ in $\kappa$ and write $A_1\sqcup A_2$ as an increasing union of sets $\{A_\alpha \colon \alpha <\kappa\}$ with $|A_\alpha|=\kappa_\alpha$. Simultaneously define in $\cf{(\kappa)}$ many steps increasing sets $\{B_\alpha'\colon \alpha < \cf(\kappa)\}$ and $\{B_\alpha''\colon \alpha < \cf(\kappa)\}$ such that \begin{itemize}
\item $B_{\alpha}'$ and $B_{\alpha}''$ are disjoint subsets of $\tilde{B}_2$ with $|B_{\alpha}'| = |B_{\alpha}''| = \kappa_\alpha$ for $\alpha <\cf(\kappa)$, 
\item $G[A_\alpha,B_\alpha']$ has a perfect matching in colour $1$ and
\item $|N_G[F,1]\cap B_\alpha''|=\kappa_\alpha$ for every finite $F\subseteq A_1\sqcup A_2$. 
\end{itemize}
Letting $\tilde{B}_3:=\bigcup \{B_\alpha'\colon \alpha <\cf(\kappa)\}$ and $B_2:=\tilde{B}_2\backslash \tilde{B}_3$ completes the construction. 
\end{proof}

Let $A_3$ consist of those vertices in $A\setminus (A_1\sqcup A_2)$ that send $\kappa$ many edges in colour~$1$ to $\tilde{B_3}$. Note that since $A_2$ is $\kappa$-star linked in $B_2$ in colour $1$, it follows that $A_2\sqcup A_3$ is ${<}\kappa$-inseparable in $G[A_2\sqcup A_3,B_2\sqcup \tilde{B}_3]$ in colour $1$. 

\begin{claim}
There is a partition $\{\hat{B}_3,\tilde{B}_4\}$ of $\tilde{B}_3$ such that \begin{itemize}
\item $A_2\sqcup A_3$ is ${<}\kappa$-inseparable in $G[A_2\sqcup A_3,B_2\sqcup \hat{B}_3]$ in colour~$1$, and
\item $\tilde{B}_{4}$ has cardinality $\kappa$. 
\end{itemize}
\end{claim}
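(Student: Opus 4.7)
The plan is to carve $\tilde{B}_4$ out of $\tilde{B}_3$ by a transfinite recursion of length $\kappa$ while ensuring that every vertex of $A_3$ retains $\kappa$ many colour-$1$ neighbours in the complement $\hat{B}_3 = \tilde{B}_3 \setminus \tilde{B}_4$. The point is that the remark preceding the claim derives the ${<}\kappa$-inseparability of $A_2 \sqcup A_3$ in $G[A_2\sqcup A_3, B_2\sqcup \tilde{B}_3]$ from structural properties---most importantly the $\kappa$-star-linkedness of $A_2$ in $B_2$ and the high colour-$1$ degree of each $v\in A_3$ into the $B$-side---that are unaffected by our modification, provided we preserve the degree condition. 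Since we touch neither $A_2$, $A_3$, $B_2$ nor the edges between them, the same argument will then establish ${<}\kappa$-inseparability in $G[A_2\sqcup A_3, B_2\sqcup \hat{B}_3]$.

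For the construction, first observe that $|\tilde{B}_3|=\kappa$, since $G[A_1\sqcup A_2, \tilde{B}_3]$ admits a perfect matching in colour~$1$ and $|A_1\sqcup A_2|=\kappa$. If $A_3=\emptyset$, then any partition of $\tilde{B}_3$ into two disjoint sets of cardinality $\kappa$ works, so assume $A_3\neq\emptyset$. Since $|A_3|\leq\kappa$ and $\kappa$ is infinite, $|A_3\times\kappa|=\kappa$; fix an enumeration $\{(v_\delta,\gamma_\delta):\delta<\kappa\}$ of $A_3\times\kappa$ in which every pair appears. By transfinite recursion on $\delta<\kappa$, pick two distinct elements
\[
x_\delta,\,y_\delta\in\bigl(N(v_\delta,1)\cap \tilde{B}_3\bigr)\setminus \{x_\beta,y_\beta:\beta<\delta\},
\]
which is possible because $|N(v_\delta,1)\cap \tilde{B}_3|=\kappa$ by the definition of $A_3$, while the set we are avoiding has cardinality at most $2|\delta|<\kappa$. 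Finally set $\tilde{B}_4:=\{y_\delta:\delta<\kappa\}$ and $\hat{B}_3:=\tilde{B}_3\setminus \tilde{B}_4$.

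Then $|\tilde{B}_4|=\kappa$ by construction, and for every $v\in A_3$ the set $\{x_\delta:v_\delta=v\}\subseteq \hat{B}_3\cap N(v,1)$ has cardinality $\kappa$, since every pair of the form $(v,\gamma)$ occurs in our enumeration. This preserves the colour-$1$ degree condition for $A_3$ into the $B$-side, so the ${<}\kappa$-inseparability of $A_2\sqcup A_3$ in $G[A_2\sqcup A_3, B_2\sqcup \hat{B}_3]$ follows by the same reasoning that yielded it for $\tilde{B}_3$. The only potential subtlety is that $\kappa$ may be singular, but this causes no problem because $|\delta|<\kappa$ holds for every $\delta<\kappa$, so the recursive step always succeeds.
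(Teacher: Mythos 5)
Your proposal is correct and follows essentially the same route as the paper: enumerate $A_3$ (with repetitions, $\kappa$ times each vertex), recursively reserve one fresh colour-$1$ neighbour in $\tilde{B}_3$ for $\hat{B}_3$ and one for $\tilde{B}_4$, put the leftovers into $\hat{B}_3$, and note that the preserved degree condition lets the earlier inseparability argument go through unchanged. The only (immaterial) difference is that the paper draws the $\tilde{B}_4$-vertex from $N_G(a_\alpha)$ in an arbitrary colour, whereas you take it from $N(v_\delta,1)$.
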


\begin{proof}\renewcommand{\qedsymbol}{$\Diamond$}
If $A_3$ is empty, then just take a balanced partition $\{\hat{B}_3,\tilde{B}_4\}$ of $\tilde{B}_3$. Otherwise, fix a sequence $(a_\alpha)_{\alpha<\kappa}$ of vertices in $A_3$ such that every vertex in $A_3$ appears $\kappa$ many times. Then fix a vertex in $N_G(a_\alpha,1)\cap \tilde{B}_3$ for $\hat{B}_3$ and another in $N_G(a_\alpha)\cap \tilde{B}_3$ for $\tilde{B}_4$ for every $\alpha<\kappa$ (all distinct). This can be done recursively in $\kappa$ many steps using that every vertex in $A_3$ sends $\kappa$ many edges in colour $1$ to $\tilde{B}_3$. It is easy to check that sets $\hat{B}_3$ and $\tilde{B}_4$ that arise in this manner fulfil our requirements. 
\end{proof}
The last partition class of $\mathcal{A}$ is just $A_4:=A\setminus (A_1\sqcup A_2\sqcup A_3)$. Applying Lemma~\ref{lemma: covering bipartition classes} to the spanning subgraph of $G[A_4,\tilde{B}_4]$ induced by the colours~$2,\dots, r$ (and the induced colouring) gives rise to disjoint monochromatic paths $P_2,\dots, P_r$ of different colours in $[2,r]$. Let $B_4:= \bigcup \{B \cap P_i\colon i\in [2,r]\}$ and $B_3:=B\setminus (B_1\sqcup B_2 \sqcup B_4)$.

We claim that $\mathcal{A}=\{A_1,\dots, A_4\}$ and $\mathcal{B}=\{B_1,\dots,B_4\}$ are as desired. Indeed, it is clear by construction that they are partitions of $A$ and $B$ respectively. From the first and second claim, it follows that \ref{item1} and \ref{item2} is satisfied respectively. By the definition of $A_4$ and $B_4$ in the last paragraph, we have \ref{item4}. And by the third claim, since $B_3 \supseteq \hat B_3$, it follows that \ref{item3} holds.
\end{proof}

Finally, we demonstrate that our approach for the proof of Theorem \ref{thm: main result}, which itself of course relies in many ways on ideas and results from Soukup's \cite{S17}, can be used to give a conceptually simple closing argument for a proof of Theorem \ref{thm: decomposition thm for complete graph}:

\CompleteCase*

\begin{proof}
Let $\kappa$ be an infinite cardinal, $G$ the complete graph on $\kappa$ and $c\colon E(G)\rightarrow [r]$ a colouring for some $r\ge 1$. 
Since the countable case has been solved in \cite[Theorem 2]{R}, we may assume that $\kappa$ is uncountable. 
Fix a partition $\{A,B\}$ of $V(G)$ such that both partition classes have cardinality $\kappa$. Apply Lemma \ref{lemma: a path and a star-linked set}---to the graph $G[A,B]$ and the colouring induced by $c$---in order to get disjoint sets $A_1,A_2\in [A]^\kappa$, $B_1,B_2\in [B]^\kappa$ and a path $P$ as in the lemma. Let $\Lambda$ be the set of limits of $P$ and write $A':= A_1\sqcup A_2$, $B':= V(G)\backslash A'$. Furthermore, let $C$ consist of $A'$ together with all those vertices in $V(G)\setminus (A'\cup \Lambda)$ that send $\kappa$ many edges in colour~$1$ to $A_2$. Apply Lemma~\ref{lemma: constructing robust paths}---to the graph induced by the edges of colour $1$ in $G[A',B']$ and the set $C$---in order to find a set $X\in [A_2]^\kappa$ and an $X$-robust path $Q$ as in the lemma. Next, apply Lemma~\ref{lemma: covering bipartition classes}---to the graph induced by the edges of colour $\neq 1$ in $G[X, B'\backslash Q]$ and the colouring induced by $c$---in order to find paths $P_2,\dots,P_r$ of different colours in $[2,r]$. The last path in colour $1$ is $Q \setminus \bigcup_i P_i$, which is a path due to the $X$-robustness of $Q$.   
\end{proof}


\section{Monochromatic paths covering a $\kappa$-star-linked set}
\label{sec3}
In this section, we prove Lemma \ref{lemma: a path and a star-linked set}. A partial result of Soukup's \cite{S17} will assist us: It implies that an edge-coloured complete bipartite graph with bipartition classes $(A,B)$ both of cardinality $\kappa>\aleph_0$ contains a monochromatic path $P$ of order type $\kappa$ covering a large ${<}\kappa$-inseparable subset of $A$ (cf. \cite[Theorem 5.10]{S17}). By modifying the proof, we obtain a strengthened version where this ${<}\kappa$-inseparable subset is even $\kappa$-star-linked, Theorem~\ref{thm: find large path} below. As the main result of this section, we explain how to establish Lemma~\ref{lemma: a path and a star-linked set} as a consequence of Theorem~\ref{thm: find large path}. The detailed proof of Theorem~\ref{thm: find large path} we defer until the end of this paper.

\subsection{Finding a monochromatic path covering a $\kappa$-star-linked set} First we remind the reader of a few concepts from Soukup's \cite{S17}:
Let $\kappa$ be a cardinal. Then $H_{\kappa,\kappa}$ denotes the graph $(\kappa \times \{0\}\cup \kappa \times \{1\}, E)$ where $$\text{$\{(\alpha,i),(\beta,j) \}\in E$ iff $i=1$, $j=2$ and $\alpha<\beta<\kappa$}$$ (cf. \cite[p.250, l.10--13]{S17}). Furthermore, a graph $G=(V,E)$ is of \emph{type $H_{\kappa,\kappa}$} if there are (not necessarily disjoint) subsets $A,B \subset V$ with $V = A \cup B$, and enumerations $A=\{a_\xi\colon  \xi <\kappa \}$ and $B=\{b_\xi\colon  \xi <\kappa \}$ such that $$\text{$\{a,b\}\in E(G)$ if $a=a_\xi$, $b= b_\zeta$ for some $\xi\le \zeta <\kappa$}.$$
The vertex set $A$ is called the \emph{main class} of $G$ and $B$ is called the \emph{second class} of~$G$ (cf. \cite[Definition 5.3]{S17}). Informally speaking, a type $H_{\kappa,\kappa}$ graph is just a copy of $H_{\kappa,\kappa}$ where the bipartition classes are allowed to intersect. 

Another concept that we need is those of a \emph{concentrated path} \cite[Definition 4.1]{S17}:
Let $G$ be a graph and $A\subseteq V(G)$. A path $P\subseteq G$ is concentrated on $A$ if and only if $$N(v)\cap A\cap V(P\restriction [x,v))\neq \emptyset $$ for all $v \in \Lambda(P)$ and $x\prec_P v$. 

\begin{theorem}\label{thm: find large path}  
If $G$ is an $r$-edge coloured graph of type $H_{\kappa,\kappa}$ with main class $A$, then there is a colour $k \in [r]$ and $X \in [A]^\kappa$ which is $\kappa$-star-linked in colour $k$, such that $X$ is covered by a monochromatic path of size $\kappa$ in colour $k$ concentrated on $X$.
\end{theorem}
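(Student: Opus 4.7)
The approach is to follow Soukup's construction in \cite[\S5]{S17}---which proves the analogous statement with a $<\kappa$-inseparable set $X$ in place of a $\kappa$-star-linked one---and to upgrade the recursion so that the stronger finite-intersection property is maintained throughout. As a first reduction, fix $H_{\kappa,\kappa}$-enumerations $A=\{a_\xi:\xi<\kappa\}$ and $B=\{b_\xi:\xi<\kappa\}$. Each $a_\xi$ has tail neighborhood $\{b_\zeta:\zeta\ge \xi\}$ of size $\kappa$, so by pigeonhole on the majority colour among those edges there is a colour $k \in [r]$ and a cofinal $A_0 \in [A]^\kappa$ such that every $a \in A_0$ sends $\kappa$-many colour-$k$-edges into its own tail in $B$.

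I would then construct $X \subseteq A_0$ and the monochromatic path $P$ simultaneously by a transfinite recursion of length $\kappa$, maintaining at stage $\alpha$ a partial path $P_\alpha$ of size $<\kappa$ concentrated on a partial set $X_\alpha \subseteq A_0$ that is already $\kappa$-star-linked in $B$ in colour $k$. At a successor stage, pick a fresh $a \in A_0$ whose colour-$k$-neighborhood meets every set of the form $N[F,k]\cap B$ with $F \subseteq X_\alpha$ finite in a subset of size $\kappa$; set $X_{\alpha+1} := X_\alpha \cup \{a\}$ and prolong $P_\alpha$ by $a$ together with suitable fresh connector vertices from $B$ providing the cofinal down-neighbors required for concentration at the next limit. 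The existence of such an $a$ is supplied either by a $\Delta$-system / elementary-submodel reflection argument (in the same spirit as in \cite{S17}) or, concretely, by observing that for any fixed finite $F \subseteq X_\alpha$ the ``bad'' set of $a \in A_0$ violating the finite-intersection condition has size $<\kappa$, and since $|[X_\alpha]^{<\omega}| < \kappa$ still $\kappa$-many candidates remain (in the regular case; the singular case is handled by running the recursion along a cofinal sequence of regular cardinals, as in Soukup). At limit stages one simply takes unions: $\kappa$-star-linkedness is preserved under unions, since its definition quantifies only over finite subsets.

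Concentration of $P$ on $X$ at a limit $v \in \Lambda(P)$ becomes essentially automatic once $X_\alpha$ is known to be $\kappa$-star-linked in colour $k$: any prescribed finite set of predecessor-pointers demanded by the concentration condition admits a common colour-$k$-neighbor in $B$ below~$v$. The genuinely new point---and the main obstacle---is the bookkeeping verification described above: the recursion must, at each of $\kappa$ successive stages, preserve the stronger finite-intersection property while simultaneously advancing the path-building side of the construction. Once this is in place, the resulting set $X \in [A]^\kappa$ together with the monochromatic path $P$ of size $\kappa$ in colour $k$ concentrated on $X$ meet all the requirements of the theorem.
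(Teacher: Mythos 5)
Your proposal has a genuine gap at its central step. In the successor stage you need a fresh $a\in A_0$ such that $|N(a,k)\cap N[F,k]\cap B|=\kappa$ for \emph{every} finite $F\subseteq X_\alpha$, and you justify this by asserting that for each fixed $F$ the set of bad candidates has size ${<}\kappa$. That assertion is unsupported and false in general: your first reduction only guarantees that each $a\in A_0$ sends $\kappa$ many colour-$k$ edges into its tail in $B$ \emph{somewhere}, not into the particular set $N[F,k]\cap B$, and an adversarial colouring can arrange that for every single colour $k$ the greedy construction of a $\kappa$-star-linked set in colour $k$ gets stuck (all but ${<}\kappa$ many candidates may send only ${<}\kappa$ many colour-$k$ edges into $N[F,k]\cap B$). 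This is precisely the difficulty the actual argument is built to circumvent: one fixes a set of colours $I$ \emph{maximal} with the property that some $X\in[A]^\kappa$ is $\kappa$-star-linked in every colour of $I$, and proves a dichotomy (the strengthened form of Soukup's Lemma 5.9, whose Claim 5.9.3 is replaced by a uniform-ultrafilter partition argument) showing that if $\spadesuit_{\kappa,i}$ fails for all $i\in I$ then some further colour $j\notin I$ admits a $\kappa$-star-linked subset of $X$, contradicting maximality. No single-colour greedy recursion substitutes for this colour-switching argument.

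A second, independent problem is your claim that concentration of $P$ on $X$ is ``essentially automatic''. Producing a path of size $\kappa$ concentrated on $X$ is the hardest part of Soukup's machinery: at a limit position of uncountable cofinality the vertex placed there must have down-neighbours in $X$ cofinally below it in the path order, and arranging this for all $\kappa$ many limits while covering $X$ requires the $\spadesuit_\kappa$ property together with a nice sequence of elementary submodels (Soukup's Lemma 4.6, quoted as Lemma~\ref{lem_constructingpaths46} in the paper). Your ``fresh connector vertices'' only handle limits of cofinality $\omega$ reached from below by the recursion itself. The paper does not attempt a direct construction: it proves the strengthened induction hypothesis $(\textup{IH})_{\kappa,r}'$ by following Soukup's induction on $\kappa$ and $r$ verbatim, changing only the choice of the maximal colour set $I$ and the ultrafilter claim, and then invokes Lemma~\ref{lem_constructingpaths46} to obtain the concentrated path.
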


\begin{proof}
Theorem~\ref{thm: find large path} follows from Theorem~\ref{thm_maintheorem111} which is a strengthening of \cite[Theorem~5.10]{S17}, to be proved in our last Section~\ref{sec_final} below.
\end{proof}

\subsection{Finding a monochromatic path covering an improved $\kappa$-star-linked set (proof of Lemma ~\ref{lemma: a path and a star-linked set})}
We need two more lemmas before we can prove Lemma~\ref{lemma: a path and a star-linked set}.

\begin{lemma}\label{lemma: char. concentrated}
Let $G$ be a bipartite graph with bipartition classes $A, B$. A path $P\subseteq G$ is concentrated on $A$ if and only if all limits of $P$ are contained in $B$. \qed
\end{lemma}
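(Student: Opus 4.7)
The plan is to prove both directions directly from the definitions, using the bipartition of $G$ to translate between the statement "all limits lie in $B$" and the requirement that $v \in \Lambda(P)$ has down-neighbours in $A$ that are cofinal below $v$. The proof should be short since essentially the bipartiteness of $G$ forces the neighbours of any vertex in $A$ to lie entirely in $B$ (and vice versa), which interacts cleanly with the defining cofinality property of a generalised path.

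For the "$\Leftarrow$" direction, I would assume $\Lambda(P) \subseteq B$ and let $v \in \Lambda(P)$ and $x \prec_P v$ be arbitrary. By the very definition of a generalised path, the down-neighbours $\{w \in N(v) : w \prec_P v\}$ are cofinal below $v$, so some such $w$ satisfies $x \preceq_P w \prec_P v$. Since $v \in B$ and $G$ is bipartite with parts $A, B$, every neighbour of $v$ lies in $A$, so $w \in A$. Then $w \in N(v) \cap A \cap V(P \restriction [x,v))$, witnessing concentration on $A$.

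For the "$\Rightarrow$" direction, I would argue by contraposition. If some $v \in \Lambda(P)$ lies in $A$, then bipartiteness gives $N(v) \subseteq B$, hence $N(v) \cap A = \emptyset$. Picking any $x \prec_P v$ (which exists because $v$ is a limit) shows that $N(v) \cap A \cap V(P \restriction [x,v)) = \emptyset$, violating concentration. Thus no limit of $P$ can lie in $A$, giving $\Lambda(P) \subseteq B$.

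Since both directions reduce immediately to the observation that in a bipartite graph any neighbour of an $A$-vertex is a $B$-vertex and conversely, there is no genuine obstacle here; the only thing to be slightly careful about is that a limit element of a well-order always has strict predecessors, so the quantifier "for all $x \prec_P v$" in the concentration definition is non-vacuous and can be used to derive the contradiction in the contrapositive direction.
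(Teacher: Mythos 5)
Your proof is correct and is exactly the immediate argument the paper has in mind when it states this lemma with an inline \qed and no written proof: bipartiteness forces $N(v)\subseteq A$ for $v\in B$ (and $N(v)\cap A=\emptyset$ for $v\in A$), which combined with the cofinality clause in the definition of a generalised path gives both directions. Your remark that limit elements have strict predecessors (so the concentration condition is non-vacuous at a limit) is the right detail to check, and it is consistent with the paper's convention that the first vertex is not counted as a limit.
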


\begin{lemma}[cf. \cite{S17}, Lemma 3.4]\label{lemma: ultrafilter}
Let $\kappa$ be an infinite cardinal, $G=(V,E)$ a graph and $A,B\subseteq V(G)$. Suppose $A$ is $\kappa$-star-linked in $B$. Moreover, let $c\colon E(G)\rightarrow [r]$ be a colouring of $G$ with $r\ge 1$ many colours. Then there is a partition $\{A_i\colon i\in[r]\}$ such that $A_i$ is $\kappa$-star-linked in $B$ in colour $i$ for every $i\in [r]$.
\end{lemma}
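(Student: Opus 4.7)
The plan is to mimic the standard ultrafilter trick (as in Soukup's Lemma 3.4). First, I would consider the family
\[
\mathcal{F} := \{N[F]\cap B : F\in [A]^{<\aleph_0}\} \;\cup\; \{B\setminus S : S\in [B]^{<\kappa}\}
\]
of subsets of $B$ and check that it has the finite intersection property. Indeed, for finitely many sets $N[F_1]\cap B,\ldots,N[F_n]\cap B$, their intersection equals $N[F_1\cup\cdots\cup F_n]\cap B$, which has cardinality $\kappa$ by the $\kappa$-star-linkedness assumption. Removing finitely many further sets of cardinality $<\kappa$ (the union of which still has size $<\kappa$) leaves a set of cardinality $\kappa$, in particular non-empty. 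Hence $\mathcal{F}$ extends to an ultrafilter $\mathcal{U}$ on $B$ with two key properties: every $N[F]\cap B$ with $F\in[A]^{<\aleph_0}$ lies in $\mathcal{U}$, and every $U\in\mathcal{U}$ has cardinality at least $\kappa$ (since $|S|<\kappa$ forces $B\setminus S\in\mathcal{F}\subseteq\mathcal{U}$).

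Next, I would use $\mathcal{U}$ to define the partition. For each $a\in A$, applying the previous step to $F=\{a\}$ gives $N(a)\cap B\in\mathcal{U}$. Since $N(a)\cap B = \bigsqcup_{i\in[r]} (N(a,i)\cap B)$ is a finite disjoint union, the ultrafilter property supplies a unique colour $i_a\in[r]$ with $N(a,i_a)\cap B\in\mathcal{U}$. Setting $A_i:=\{a\in A : i_a = i\}$ for each $i\in[r]$ yields a partition of $A$.

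Finally, I would verify the desired property. For any non-empty finite $F\subseteq A_i$, the set
\[
N[F,i]\cap B \;=\; \bigcap_{a\in F}\bigl(N(a,i)\cap B\bigr)
\]
is a finite intersection of elements of $\mathcal{U}$ and therefore lies in $\mathcal{U}$; by the second property of $\mathcal{U}$ it has cardinality $\geq\kappa$, and since $N[F,i]\cap B\subseteq N[F]\cap B$ it has cardinality exactly $\kappa$. The case $F=\emptyset$ reduces to $|B|=\kappa$, which follows by applying the hypothesis to $F=\emptyset\subseteq A$.

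The argument contains no real obstacle beyond recognising that the two competing demands on $\mathcal{U}$, namely that it ``sees'' all common neighbourhoods $N[F]\cap B$ and simultaneously refuses to concentrate on any $<\kappa$-sized set, are compatible; this compatibility is precisely what the finite intersection property of $\mathcal{F}$ encodes, and the rest is routine.
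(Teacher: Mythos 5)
Your proposal is correct and follows essentially the same route as the paper, which takes a uniform ultrafilter $\mathcal{U}$ on $B$ containing every $N[F]\cap B$, sets $A_i=\{v\in A\colon N(v,i)\in\mathcal{U}\}$, and concludes via closure of $\mathcal{U}$ under finite intersections. You merely make explicit what the word ``uniform'' encodes (the finite intersection property of the common neighbourhoods together with the co-${<}\kappa$ sets), which is a fine, if more verbose, rendering of the same argument.
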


\begin{proof}
Take a uniform ultrafilter $\mathcal{U}$ on $B$ with $B\cap N[F]\in \mathcal{U}$ for every finite $F\subseteq A$ and write $A_i=\{v\in A\colon N(v,i)\in\mathcal{U}\}$. 
Then for $i\in [r]$ and $F\subseteq A_i$, we have $N[F,i]\cap B\in \mathcal{U}$ and thus $N[F,i]\cap B$ has cardinality $\kappa$.  
\end{proof}

We are now ready to provide the proof for Lemma~\ref{lemma: a path and a star-linked set} which we restate here for convenience of the reader.

\SecondMainLemma*

\begin{proof}
Fix a set $A'\in [A]^\kappa$ that is $\kappa$-star-linked in as many colours as possible and let $I$ be the set of those colours. By Lemma \ref{lemma: ultrafilter}, the set $I$ is non-empty and we may assume that colour $1$ is contained in $I$. 

\begin{claim}
There are disjoint sets $B_1',B_2'\subseteq B$ such that $A'$ is $\kappa$-star-linked in $B_1'$ and $B_2'$, in all colours in $I$.   
\end{claim}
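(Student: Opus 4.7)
The plan is to build $B_1'$ and $B_2'$ by a transfinite recursion of length $\kappa$, using a bookkeeping enumeration that visits every pair $(F,i) \in [A']^{<\omega}\times I$ cofinally often along the way.

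First, since $|A'| = \kappa$ is infinite and $|I| \le r$ is finite, we have $|[A']^{<\omega}\times I\times\kappa| = \kappa$, so I can enumerate $[A']^{<\omega}\times I\times\kappa = \{(F_\alpha,i_\alpha,\xi_\alpha) : \alpha < \kappa\}$ in such a way that every triple appears. The third coordinate $\xi_\alpha \in \kappa$ plays no role beyond being a multiplicity parameter that guarantees each pair $(F,i)$ is visited $\kappa$-many times along the enumeration; this is precisely what will yield cardinality-$\kappa$ witness sets at the end.

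Next, I recursively pick, for each $\alpha < \kappa$, two distinct vertices $b_\alpha^1,\, b_\alpha^2 \in N[F_\alpha, i_\alpha]\cap B$ that are disjoint from all previously chosen $b_\beta^j$ with $\beta < \alpha$ and $j\in\{1,2\}$. This is possible because $A'$ is $\kappa$-star-linked in colour $i_\alpha \in I$, so $|N[F_\alpha,i_\alpha]\cap B| = \kappa$, while the number of vertices already chosen at stage $\alpha$ is at most $2\cdot|\alpha| < \kappa$.

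Finally, set $B_j' := \{b_\alpha^j : \alpha < \kappa\}$ for $j = 1,2$. These sets are disjoint by construction, and for any prescribed $(F,i) \in [A']^{<\omega}\times I$, the index set $\{\alpha < \kappa : (F_\alpha,i_\alpha) = (F,i)\}$ has cardinality $\kappa$, which immediately yields $|B_j' \cap N[F,i]| = \kappa$. I do not anticipate a genuine obstacle here: this is a routine diagonal bookkeeping construction, and the $\kappa$-star-linked hypothesis in every colour of $I$ supplies exactly the resource needed at each stage of the recursion.
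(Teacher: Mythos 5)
Your proof is correct, and it takes a genuinely different (though equally routine) route from the paper's. The paper fixes a strictly increasing cofinal sequence of regular cardinals $(\kappa_\alpha)_{\alpha<\cf(\kappa)}$, writes $A'$ as an increasing union of sets $A_\alpha$ of size $\kappa_\alpha$, and builds $B_1'$ and $B_2'$ in only $\cf(\kappa)$ many steps as increasing unions of disjoint blocks $B^1_\alpha, B^2_\alpha$ of size $\kappa_\alpha$, arranging at stage $\alpha$ that $A_\alpha$ is $\kappa_\alpha$-star-linked in each block. You instead run a transfinite recursion of full length $\kappa$, discharging one requirement $(F,i)$ at a time via a bookkeeping enumeration of $[A']^{<\omega}\times I\times\kappa$ and adding just two new vertices per step. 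Both arguments rest on the same resource count --- at any stage only ${<}\kappa$ vertices have been committed, while each $N[F,i]\cap B$ has size $\kappa$ --- and both handle singular $\kappa$ correctly. Yours dispenses with the cofinal sequence of regular cardinals altogether and is arguably the more elementary; the paper's block-style construction has the advantage of matching the analogous constructions elsewhere in the paper (e.g.\ the second claim in the proof of the main theorem), where the approximating sets and cardinals are reused for further purposes.
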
 

\begin{proof}\renewcommand{\qedsymbol}{$\Diamond$}
Let $(\kappa_\alpha)_{\alpha<\cf(\kappa)}$ be a strictly increasing cofinal sequence of regular cardinals in $\kappa$ and write $A'$ as an increasing union of sets $(A_\alpha)_{\alpha<\cf(\kappa)}$ such that $A_\alpha$ has cardinality $\kappa_\alpha$ for $\alpha <\cf(\kappa)$. The sets $B_1'$ and $B_2'$ can be constructed simultaneously in $\cf(\kappa)$ many steps. Indeed---using that $A'$ is $\kappa$-star-linked in all colours in $I$ and the fact that $A_\alpha$ has only $\kappa_\alpha$ many finite subsets---it is straightforward to find two increasing sequences $(B^1_{\alpha})_{\alpha <\cf(\kappa)}$ and $(B^2_{\alpha})_{\alpha <\cf(\kappa)}$ of subsets of $B$ such that for all $\alpha < \cf(\kappa)$ \begin{itemize}
\item $B^i_\alpha$ has cardinality $\kappa_\alpha$ for both $i=1,2$,
\item $B^1_\alpha$ and $B^2_\alpha$ are disjoint, and
\item $A_\alpha$ is $\kappa_\alpha$-star-linked in $B^i_\alpha$ for both $i=1,2$ in all colours in $I$.
\end{itemize}
Letting $B_1'=\bigcup \{B^1_{\alpha} \colon \alpha<\cf(\kappa)\}$ and $B_2'=\bigcup \{B^2_{\alpha} \colon \alpha<\cf(\kappa)\}$ completes our construction.
\end{proof}
Fix $B_1'$ and $B_2'$ as in the above claim and let $\{A_1',A_2'\}$ be a partition of $A'$ such that both partition classes have cardinality $\kappa$. Since $G[A_1',B_1']$ is complete bipartite, it is in particular of type $H_{\kappa,\kappa}$, and so we may apply Theorem \ref{thm: find large path} to $G[A_1',B_1']$ to find a colour $k\in[r]$ and $X\in [A_1']^\kappa$ which is $\kappa$-star-linked in colour $k$ such that $X$ is covered by a monochromatic path $P$ (say) of size $\kappa$ in colour $k$ concentrated on~$X$. 

By the maximality of $I$ we have $k\in I$ and we may assume $k=1$. Furthermore, by Lemma~\ref{lemma: char. concentrated} we know that all limits of $P$ are contained in $B_1'$. Hence, letting $A_1:= A\cap P$, $A_2:= A_2'$, $B_1:= B\cap P$ and $B_2:= B_2'$ completes the proof.  \end{proof}



\section{Constructing robust paths}
\label{sec4}
In this section we will prove Lemma~\ref{lemma: constructing robust paths}. There are two major steps: First, we show how to find a ray $R$ that is $\{x\}$-robust for a single vertex~$x$. Second, we will construct the path $Q$ as a concatenation of rays each including a copy of $R$. The set $X$ for which $Q$ is $X$-robust will be the set of vertices $x$ in the various copies of~$R$.

\subsection{Constructing countable robust paths}
Consider the one-way infinite ladder on the positive integers shown in Figure \ref{fig: ladder}. The well-order $\le$ on the positive integers together with this ladder then forms a (generalised) path $R$, and it is easy to see that $R$ is $\{2\}$-robust. Indeed, the graph $R'=R-\{2\}$ has the one-way infinite path $R'=1436587\dots$ as a spanning subgraph. Note that additionally, the first vertices of $R'$ and of $R$ coincide. 

\vspace{0,5cm}
\begin{figure}[h]
\begin{center}
\def\svgwidth{7cm}
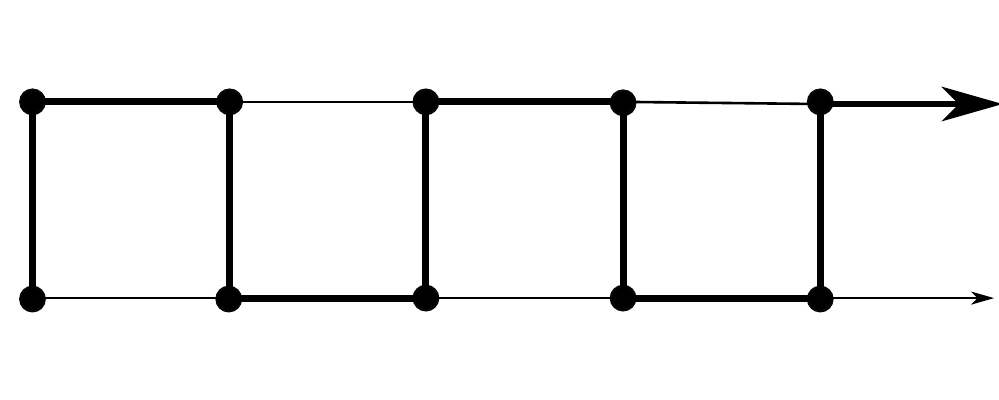
\end{center}
\vspace{0,5cm}
\caption{The fat edges indicate the path order of the one-way ladder on the positive integers.}
\label{fig: ladder}
\end{figure}

As we work in the bipartite setting, it is of importance that generalised paths that we want to install are bipartite. Our ray $R$ is bipartite as shown in Figure~\ref{fig: twisted ladder}.

\vspace{0,5cm}
\begin{figure}[h]
\begin{center}
\def\svgwidth{7cm}
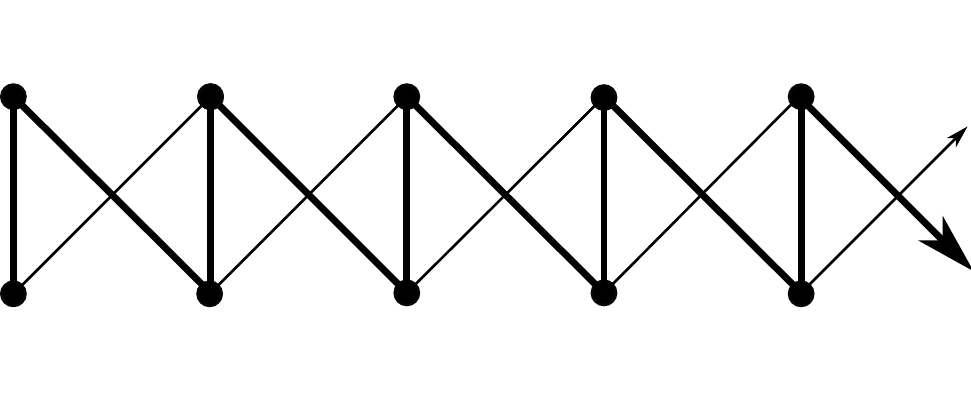
\end{center}
\vspace{0,5cm}
\caption{The top and bottom vertices in the ladder define the two bipartition classes of the one-way infinite ladder. The path order from Figure \ref{fig: ladder} is indicated fat again.}
\label{fig: twisted ladder}
\end{figure}
All countably infinite robust paths we construct will always consist of some finite path $Q$ followed by a copy of $R$, where we denote this concatenation by $Q^\frown R$. It is easy to see that the path $Q^{\frown}R$ is then $\{x\}$-robust, where $x$ is the vertex corresponding to the vertex $2\in V(R)$. The following lemma is our key lemma for constructing paths of that kind: 

\begin{lemma}\label{lemma: construct robust fragment}
Let $G$ be a bipartite graph with bipartition classes $A,B$ such that $A$ is countably infinite. Suppose further that $A$ is $\aleph_0$-star-linked and $a\in A$ is some fixed vertex. Then for any vertex $x\in A\setminus\{a\}$ there is an $x$-robust ray $R$ in $G$ starting in the vertex~$a$ and covering $A$. Moreover, there is a path order of $R-x$ with first vertex $a$. 
\end{lemma}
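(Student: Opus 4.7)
\emph{Plan.} The plan is to build $R$ as a ``twisted ladder'' in the bipartite graph $G$, modelled on Figures~\ref{fig: ladder} and \ref{fig: twisted ladder}, but with $a$ occupying position $0$ of the path order and $x$ placed at position $2$ (rather than position $1$ as in the non-bipartite model); because any ray in $G$ starting in $A$ must alternate $A,B,A,B,\dots$ and $x$ lies in $A$, this shift in position is forced by bipartiteness.

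First, I would enumerate $A = \{a_i : i \in \N\}$ with $a_0 = a$ and $a_1 = x$, and set $r_{2i} := a_i$, pinning the $A$-vertices of the ray to the even positions. Then I would recursively choose distinct $r_{2i+1} \in B$ ($i \in \N$) satisfying
\[
r_{2i+1} \in N[\{a_i, a_{i+1}, a_{i+2}\}],
\]
which is possible because $A$ is $\aleph_0$-star-linked, so this common neighbourhood is countably infinite at every step (and all previous choices form a finite set to be avoided). Taking $R$ to be the subgraph of $G$ on $\{r_j : j \in \N\}$ carrying the ``rail'' edges $r_j r_{j+1}$ ($j \geq 0$) together with the ``twist'' edges $r_{2i+1} r_{2i+4}$ ($i \geq 0$)---all of which are present by construction---yields, under the natural order on indices, a ray of order type $\omega$ starting at $a$ and covering $A$; note that $x = a_1 = r_2$.

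Robustness with respect to $X=\{x\}$ is witnessed by the explicit reordering
\[
r_0,\; r_1,\; r_4,\; r_3,\; r_6,\; r_5,\; r_8,\; r_7,\; r_{10},\; r_9,\; \dots
\]
of $V(R)\setminus\{x\}$, whose consecutive pairs alternate between twist edges $r_{2i+1} r_{2i+4}$ and rail edges $r_{2i+4} r_{2i+3}$, with the initial pair $r_0 r_1$ also being a rail edge. Hence $R-x$ is again a ray of order type $\omega$ starting at $r_0 = a$, which proves both the robustness and the ``moreover'' clause. The only real subtlety is spotting the correct bipartite ladder pattern compatible with the alternation (so that $x$ lands at an even position); once it is in place, the construction collapses to a routine countable recursion and I do not foresee any further obstacle.
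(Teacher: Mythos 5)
Your construction is correct and is essentially the paper's own proof: both build the same bipartite twisted ladder by choosing, for each consecutive triple of $A$-vertices, a common neighbour in $B$ via $\aleph_0$-star-linkedness, placing $x$ at position $2$ of the path order and witnessing robustness by the reordering $r_0, r_1, r_4, r_3, r_6, r_5,\dots$. The only difference is that you write out the rail/twist edges and the reordering explicitly, where the paper delegates this to Figures~\ref{fig: ladder} and~\ref{fig: twisted ladder}.
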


\begin{proof} Fix an enumeration of $(a_n)_{n\ge 1}$ of $A$ with $a_1= a$ and $a_2=x$. For $n\ge 1$ fix distinct vertices $(b_n)_{n\ge 1}$ such that $b_n$ is contained in the common neighbourhood of $\{a_n,a_{n+1},a_{n+2}\}$ for $n\ge 1$. This can be done since $A$ is $\aleph_0$-star-linked. 

Let us write $B'=\{b_n\colon n\ge 1\}$. Then $G[(A\setminus \{a_1\})\cup B']$ has a copy\footnote{The vertices in $A\setminus\{a_1\}$ correspond to the upper vertices in Figure \ref{fig: twisted ladder} and the vertices in $B'$ to the bottom vertices. The enumerations of $A\setminus \{a_1\}$ and respectively $B'$ are the enumeration which `go from left to the right'.} of the one-way infinite ladder $L$ on $\omega$ as a spanning subgraph where $b_1$ corresponds to the vertex $1$ and $x$ corresponds to the vertex $2$ of $L$. Let us write $R'$ for this copy of $L$ and endow $R'$ with the path order induced by the path order $\le$ on $L$. By our observations at the beginning of this subsection, the ray $R={a_1}^\frown R'$ is $\{x\}$-robust and starts with $a$. 
\end{proof}

\subsection{Constructing uncountable robust paths}

\ThirdMainLemma*

\begin{proof}
Let us write $\lambda_0$ for the first vertex on $P$ and let $\{\lambda_\alpha \colon 1\le\alpha <\kappa\}$ be the enumeration of the limits of $P$ along the path order of $P$, i.e. we have $\lambda_\alpha\prec_P \lambda_\beta$ whenever $1\le \alpha<\beta$. Fix an enumeration $\{c_\alpha\colon \alpha<\kappa\}$ of $C$, (choose $C=A_2$ if $C$ is not specified). Note that $C$ has indeed cardinality $\kappa$ as $A_2$ is included in $C$. We construct a sequence of pairwise disjoint paths $\mathcal{S}=(S_\alpha)_{\alpha<\kappa}$ and a sequence of distinct vertices $(x_\alpha)_{\alpha<\kappa}$ from $A_2$ satisfying the following: 
\begin{enumerate}
\item $S_\alpha$ has order type $\omega$,
\item $S_\alpha$ has first vertex $\lambda_\alpha$ and doesn't meet any other limits of $P$, 
\item $S_\alpha$ is $x_\alpha$-robust and there is a path order $\prec_{S_\alpha-x_\alpha}$ of $S_\alpha - x_\alpha$ that has first vertex $\lambda_\alpha$,
\item $S_\alpha\cap A\cap P=A\cap P\restriction [\lambda_\alpha,\lambda_\alpha + \omega)$ and 
\item $\bigcup_{\beta \leq \alpha} S_\alpha$ contains $c_\alpha$.  
\end{enumerate}
Once $\mathcal{S}$ is defined we obtain $Q$ as the concatenation $Q= S_0^\frown S_1 ^\frown S_2 ^\frown\cdots$ (formally, the path order is given by the lexicographic order on $\bigcup_{\alpha < \kappa} \{\alpha\} \times S_\alpha$). Indeed, conditions (1) and (2) guarantee that the limits of $Q$ and the limits of $P$ coincide, and so it follows from (4) that $Q$ is indeed a generalised path. By condition (5), the path $Q$ covers $C$. Finally, put $X=\{x_\alpha \colon \alpha<\kappa\}$.

\begin{claim} 
The path $Q$ is $X$-robust. 
\end{claim}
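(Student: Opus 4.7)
The plan is to witness $X$-robustness by an explicit path order: given an arbitrary $X' \subseteq X$, for each $\alpha < \kappa$ set
\[
T_\alpha := \begin{cases} S_\alpha - x_\alpha, \text{ endowed with } \prec_{S_\alpha - x_\alpha} & \text{if } x_\alpha \in X', \\ S_\alpha, \text{ endowed with } \prec_{S_\alpha} & \text{otherwise.}\end{cases}
\]
By (1) and (3), each $T_\alpha$ is a generalised path of order type $\omega$ whose first vertex is $\lambda_\alpha$. I would then define $\prec^*$ on $Q - X' = \bigsqcup_{\alpha<\kappa} T_\alpha$ by the lexicographic rule used to build $\prec_Q$ from the $S_\alpha$: namely $v \prec^* w$ iff either $v \in T_\alpha$ and $w \in T_\beta$ with $\alpha < \beta$, or $v, w \in T_\alpha$ and $v \prec_{T_\alpha} w$. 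This is a well-order of order type $\omega \cdot \kappa = \kappa$.

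The heart of the argument is to show that $(T_\alpha)_{\alpha<\kappa}$ inherits the structural conditions (1), (2), (4) on $(S_\alpha)_{\alpha<\kappa}$ that made $Q = S_0^\frown S_1^\frown \cdots$ a generalised path. Conditions (1) and (2) are immediate from our choice of $T_\alpha$ and the inclusion $T_\alpha \subseteq S_\alpha$. The critical point is (4). Here the key observation is that $X \subseteq A_2$ is disjoint from $V(P) \cap A \subseteq A_1$, so $x_\alpha \notin V(P)$ for every $\alpha$. Consequently
\[
T_\alpha \cap A \cap P \;=\; S_\alpha \cap A \cap P \;=\; A \cap P\restriction [\lambda_\alpha, \lambda_\alpha + \omega),
\]
so (4) carries over unchanged.

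With these conditions in hand, the same reasoning that established $Q$ is a generalised path applies verbatim to $(Q - X', \prec^*)$: the limits of $\prec^*$ again coincide with $\Lambda(P) = \Lambda(Q)$, and the total order type matches that of $Q$. Since $X'$ was arbitrary, $Q$ is $X$-robust, completing the claim.

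The step I expect to require the most care is verifying that the cofinality of down-neighbours at each limit $\lambda_\alpha$ really survives under $\prec^*$. This is essentially the content of the observation above: because $X \cap V(P) = \emptyset$, all the $A_1$-vertices of $P$ that witness cofinality of $\lambda_\alpha$ in $Q$ remain present in $Q - X'$, and $\prec^*$ restricted to the complement of $X'$ is order-isomorphic to $\prec_Q$ on the same set (removing an $x_\alpha \in A_2$ merely shuffles the internal ordering of one $S_\alpha$, never the relative ordering between distinct blocks). Thus the cofinality transfers directly from the already established claim that $Q$ is a path.
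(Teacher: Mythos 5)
Your proposal is correct and follows essentially the same route as the paper: replace each block $S_\alpha$ meeting $X'$ by $(S_\alpha - x_\alpha, \prec_{S_\alpha - x_\alpha})$ using condition (3), leave the others untouched, and concatenate to obtain a path of order type $\kappa$ covering $Q - X'$. Your additional verification that condition (4) survives because $X \subseteq A_2$ is disjoint from $V(P) \cap A \subseteq A_1$ is a correct (and slightly more explicit) account of why the limits and their cofinal down-neighbourhoods are preserved.
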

\begin{proof}\renewcommand{\qedsymbol}{$\Diamond$} In order to see that $Q$ is $X$-robust, let $X'\subseteq X$ be arbitrary. Let $S_\alpha'$ be the path $(S_\alpha-x_\alpha,\prec_{S_\alpha-x_\alpha})$, if $S_\alpha$ meets $X'$ and $S_\alpha'=S_\alpha$ otherwise. Then $Q'= S_0'^\frown S_1'^\frown S_2'^\frown\cdots$ is a path of order type $\kappa$ covering $Q-X'$.  
\end{proof}

It remains to define $\mathcal{S}=(S_\alpha)_{\alpha<\kappa}$. Suppose that $S_\alpha$ has already been defined for $\alpha<\beta$. Write $\Sigma_{\beta} := \bigcup_{\alpha < \beta } S_\alpha$, a set of cardinality ${<}\kappa$. We first find a finite path $T$ that \begin{itemize}
\item starts in $\lambda_\beta$,
\item ends in a vertex $a\in A_2$ (say),
\item contains $c_\beta$ (unless $c_\beta \in \Sigma_\beta$ already), 
\item avoids $\Sigma_{\beta}$ and meets $P$ only in $P\restriction [\lambda_\beta,\lambda_\beta+\omega)$.
\end{itemize}
\begin{claim}
A path $T$ as above exists.
\end{claim}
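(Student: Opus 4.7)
My plan is to build $T$ by cases according to the position of $c_\beta$. Throughout, let $a_0$ denote the $P$-successor of $\lambda_\beta$: it lies in $A_1\cap P\restriction[\lambda_\beta,\lambda_\beta+\omega)$ and is adjacent to $\lambda_\beta$ (since every successor in a generalised path is adjacent to its predecessor). The $\kappa$-star-linkedness of $A_1\sqcup A_2$ in $B_2$ lets me link $a_0$ to any prescribed vertex of $A_2$ via a bridge in $B_2$: for every finite $F\subseteq A_1\sqcup A_2$ the common neighbourhood $N[F]\cap B_2$ has cardinality $\kappa$. Crucially, since $P\subseteq A_1\cup B_1$, the sets $A_2$ and $B_2$ are disjoint from $P$, so any bridge chosen this way contributes no $P$-vertex to $T$.

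In the easy cases $c_\beta\in\Sigma_\beta$ or $c_\beta\in A_2\setminus\Sigma_\beta$, the requirement that $T$ contain $c_\beta$ is either vacuous or fulfilled by taking $a:=c_\beta$ as the endpoint in $A_2$. Picking a suitable $a\in A_2\setminus\Sigma_\beta$ (respectively $a:=c_\beta$) and a bridge $b\in(N(a_0)\cap N(a)\cap B_2)\setminus\Sigma_\beta$ produces $T:=\lambda_\beta\,a_0\,b\,a$, whose only $P$-vertices are $\lambda_\beta$ and $a_0$, both inside the ray.

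The interesting case is $c_\beta\in\bigl((A\setminus(A_1\sqcup A_2))\sqcup(B\setminus\Lambda)\bigr)\setminus\Sigma_\beta$, which I handle by using the ${<}\kappa$-inseparability of $C$ in $G[A\setminus A_1,B\setminus\Lambda]$. Setting $W_1:=\Sigma_\beta\cup\{a_0\}$ and choosing $a_1\in A_2\setminus W_1$, inseparability yields a finite $c_\beta$--$a_1$ path $\pi_1$ in $G[A\setminus A_1,B\setminus\Lambda]-W_1$. I then pick $a_2\in A_2\setminus(W_1\cup V(\pi_1))$, set $W_2:=W_1\cup(V(\pi_1)\setminus\{c_\beta\})$, and apply inseparability a second time to obtain a finite $c_\beta$--$a_2$ path $\pi_2$ meeting $\pi_1$ only at $c_\beta$. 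Their union is a path from $a_1$ through $c_\beta$ to $a_2$; a star-linked bridge $b\in(N(a_0)\cap N(a_1)\cap B_2)\setminus(\Sigma_\beta\cup V(\pi_1)\cup V(\pi_2))$ then allows us to form $T$ by prepending $\lambda_\beta\,a_0\,b$ to this path. The resulting $T$ starts at $\lambda_\beta$, ends at $a_2\in A_2$, and contains $c_\beta$.

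The main obstacle is ensuring $\pi_1,\pi_2$ meet $P$ only inside the ray. Their $A$-vertices lie in $A\setminus A_1$ and are therefore automatically disjoint from $P\cap A=P\cap A_1$; the danger is that their $B$-vertices might land in $P\cap B_1\setminus\Lambda$, a set of cardinality $\kappa$ that cannot be placed wholesale into the cuts $W_i$ of size ${<}\kappa$. I plan to resolve this by iteratively enlarging $W_i$ with the (always finite) offending $P$-vertices discovered in the current attempt and reapplying inseparability, which keeps $|W_i|<\kappa$ at every stage. In the setting in which the lemma is actually used, within the proof of Theorem~\ref{thm: main result}, the ${<}\kappa$-inseparability of $C$ is in fact witnessed inside $G[A_2\sqcup A_3,B_2\sqcup B_3]$, a subgraph disjoint from $P$, so a single application of inseparability inside that smaller graph already produces $\pi_1,\pi_2$ avoiding $P$ entirely.
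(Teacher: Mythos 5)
Your construction is essentially the paper's: a first edge of $P$ from $\lambda_\beta$ to its successor, a length-two bridge through $B_2$ into $A_2$ supplied by the $\kappa$-star-linkedness of $A_1\sqcup A_2$, and a detour through $c_\beta$ supplied by the ${<}\kappa$-inseparability of $C$, ending back in $A_2$. Two points need attention. First, for $\beta=0$ the first vertex $\lambda_0$ of $P$ may itself lie in $A_1$ (only the limits with $\beta\ge 1$ are guaranteed to lie in $B_1$); in that case the $P$-successor $a_0$ lies in $B_1$, and your word $\lambda_0\,a_0\,b$ with $b\in B_2$ is not a path of the bipartite graph. The paper's $T_1$ has edge-length $0$ in that case, i.e.\ one attaches the $B_2$-bridge directly to $\lambda_0$.

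Second, your worry that $\pi_1,\pi_2$ might hit $P\cap(B_1\setminus\Lambda)$ is legitimate, but the proposed repair---iteratively adding the finitely many offending vertices to the cut and re-applying inseparability---has no termination argument: each new path may introduce new offenders, and nothing rules out an infinite regress. The paper resolves this differently: it only requires the inseparability path $T_3$ to avoid $A_1\cup\Lambda(P)\cup\Sigma_\beta$ together with the earlier segments, which is automatic because the path lives in $G[A\setminus A_1,B\setminus\Lambda]$. The full-strength bullet ``meets $P$ only in $P\restriction[\lambda_\beta,\lambda_\beta+\omega)$'' is never used downstream in that strength: conditions (2) and (4) on $S_\beta$ concern only $\Lambda(P)$ and $A\cap P$, and disjointness of the $S_\alpha$ is secured by avoiding $\Sigma_\gamma$ at each later stage, so a $B_1\setminus\Lambda$ vertex outside the ray does no harm. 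Your fallback observation---that in the application inside Theorem~\ref{thm: main result} the inseparability of $C$ is witnessed in a subgraph disjoint from $P$---is correct, but the lemma is also invoked in the proof of Theorem~\ref{thm: decomposition thm for complete graph}, so one should not lean on that; the weakening of what $T_3$ must avoid is the clean fix.
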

\begin{proof}\renewcommand{\qedsymbol}{$\Diamond$} Let $T_1$ be the path of (edge-)length $1$ or $0$, that starts in $\lambda_\beta$ and is followed by the successor of $\lambda_\beta$ on $P$ if $\lambda_\beta$ is not already contained\footnote{Since all limits of $P$ are contained in $B_1$, we have $\lambda_\beta\notin A_1$ as soon as $\beta\ge 1$. In the case where $\beta=0$, we might have $\lambda_\beta \in A_1$.} in~$A_1$. 

Let $w_1$ denote the last vertex on $T_1$, and note that $w_1 \notin \Sigma_\beta$ by (4). Since $A_1\sqcup A_2$ is $\kappa$-star-linked in $B_2$, we may chose any $w_2 \in A_2 \setminus \Sigma_\beta$ and find a vertex $w_3 \in (B_2 \cap N[\{w_1,w_2 \}]) \setminus \Sigma_\beta$ so that $w_1w_3w_2$ forms a path of (edge-)length two.

If $c_\beta$ is not yet covered by $\Sigma_\beta$, as $C$ is ${<}\kappa$-inseparable, we find a finite path $T_3$ that contains $c_\beta$, starts in the vertex $w_2$ and ends in a vertex $a \in A_2$ and avoids $$A_1 \cup \Lambda(P) \cup \Sigma_{\beta} \cup V( T_1)\cup V(T_2).$$  
Otherwise, we put $T_3 = \emptyset$ and $a = w_2$. Then $T$ can be chosen as $T_1^\frown T_2^\frown T_3$. 
\end{proof}
To complete the proof, we now find a path $R$ of order type $\omega$ such that it \begin{itemize}
\item starts in the vertex $a$ and avoids $T$ everywhere else,
\item is $\{x_\beta\}$-robust for a vertex $x_\beta \in A_2\setminus \{a\}$ and there is a path order of $R-x_\beta$ that starts with $a$,
\item avoids $\Sigma_{\beta}$ and meets $P$ only and in $P\restriction [\lambda_\beta,\lambda_\beta+\omega)$.
\end{itemize}

\begin{claim}
A path $R$ as above exists.
\end{claim}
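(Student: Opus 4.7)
The plan is to deduce the existence of $R$ from the countable Lemma~\ref{lemma: construct robust fragment} by extracting a countable bipartite subgraph of $G$ that retains the star-linked structure while avoiding the forbidden set $\Sigma_\beta \cup (V(T) \setminus \{a\})$.

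First, since $|\Sigma_\beta \cup V(T)| < \kappa$ and $a \in A_2 \setminus \Sigma_\beta$, I would pick a countable set $A_0 \subseteq A_2$ with $a \in A_0$, a further vertex $x_\beta \in A_0 \setminus \{a\}$, $A_0 \cap V(T) = \{a\}$, and $A_0 \cap \Sigma_\beta = \emptyset$. Next, enumerating the finite subsets of $A_0$ as $(F_n)_{n < \omega}$, I would recursively build a countable set $B_0 \subseteq B_2 \setminus (\Sigma_\beta \cup V(T))$ satisfying $|N_G[F_n] \cap B_0| = \aleph_0$ for every $n$. At each stage, the set $N_G[F_n] \cap B_2$ has cardinality $\kappa$ by the $\kappa$-star-linkedness hypothesis on $A_1 \sqcup A_2$ in $B_2$, whereas only ${<}\kappa$ vertices are forbidden, so there is always room to pick $\aleph_0$ new vertices. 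The resulting bipartite graph $H := G[A_0, B_0]$ has countable main class $A_0$ which is $\aleph_0$-star-linked in~$H$.

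Applying Lemma~\ref{lemma: construct robust fragment} inside $H$ with base vertex $a$ and deleted vertex $x_\beta$ then delivers a ray $R \subseteq H$ of order type $\omega$ starting at $a$, covering $A_0$, that is $\{x_\beta\}$-robust, and such that $R - x_\beta$ admits a path order whose first vertex is~$a$. For the verification, $V(R) \subseteq A_0 \cup B_0$ is disjoint from $\Sigma_\beta$ and meets $V(T)$ only in $a$. Moreover, $V(P) \cap A \subseteq A_1$ is disjoint from $A_0 \subseteq A_2$ and $V(P) \cap B \subseteq B_1$ is disjoint from $B_0 \subseteq B_2$, so $V(R) \cap V(P) = \emptyset$; in particular the requirement that $R$ meets $P$ only in $P\restriction [\lambda_\beta, \lambda_\beta + \omega)$ is satisfied vacuously.

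The only mildly technical point---not really an obstacle---is the book-keeping in the construction of $B_0$: simultaneously avoiding the ${<}\kappa$-sized forbidden set and securing $\aleph_0$-star-linkedness. Both are handled comfortably by the cardinality gap between $\kappa$ and~${<}\kappa$, so the genuine content of the claim lies in Lemma~\ref{lemma: construct robust fragment} itself, which has already been established.
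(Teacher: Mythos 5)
Your construction verifies the three bullet points as literally stated, but it misses the purpose of the claim within the surrounding induction, and the resulting $R$ makes the rest of the proof collapse. The ray $S_\beta = T^\frown R$ must satisfy property (4) of the construction, namely $S_\beta\cap A\cap P = A\cap P\restriction[\lambda_\beta,\lambda_\beta+\omega)$: the segment $S_\beta$ has to \emph{sweep up all} the $A$-vertices of $P$ lying in the interval $[\lambda_\beta,\lambda_\beta+\omega)$. This is needed twice later: first, so that the concatenation $Q=S_0^\frown S_1^\frown\cdots$ covers $A_1=A\cap V(P)$ (the intervals $[\lambda_\alpha,\lambda_\alpha+\omega)$ partition $V(P)$); and second, so that $Q$ is a generalised path at all --- each limit $\lambda_{\alpha+1}$ of $Q$ needs cofinally many down-neighbours in $S_\alpha$, and the only neighbours one can point to are its $P$-neighbours in $A\cap P\restriction[\lambda_\alpha,\lambda_\alpha+\omega)$, since $G$ is an arbitrary bipartite graph here (in the application it is the colour-$1$ subgraph) and nothing guarantees that $\lambda_{\alpha+1}\in B_1$ has even a single neighbour in $A_2$. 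Your $R$ lives entirely inside $A_0\cup B_0\subseteq A_2\cup B_2$ and is disjoint from $P$, so $S_\beta$ contains only the finitely many $P$-vertices of $T$, property (4) fails, $Q$ need not be a path, and $A_1$ is left uncovered.

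This is exactly why the paper applies Lemma~\ref{lemma: construct robust fragment} with the specific main class $A'=\{a,x_\beta\}\cup\bigl((A\cap P\restriction[\lambda_\beta,\lambda_\beta+\omega))\setminus V(T)\bigr)$ rather than an arbitrary countable subset of $A_2$: the ray produced by that lemma \emph{covers} its main class, so taking $A'$ to contain the $A$-part of the current segment of $P$ is what secures (4). Note also that $A'$ is $\aleph_0$-star-linked in $B'=B_2\setminus(\Sigma_\beta\cup V(T))$ because $A'\subseteq A_1\sqcup A_2$, which is $\kappa$-star-linked in $B_2$, and only ${<}\kappa$ vertices are removed --- so no separate thinning of $B_2$ to a countable set is needed. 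Your bookkeeping for $B_0$ is harmless but solves the wrong problem; the essential step you are missing is the choice of the main class $A'$.
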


\begin{proof}\renewcommand{\qedsymbol}{$\Diamond$}
Choose $x_\beta \in A_2 \setminus (\Sigma_\beta \cup V(T))$ arbitrary. Apply Lemma \ref{lemma: construct robust fragment} inside the bipartite graph $G[A',B']$ with the vertex $a$ and the vertex $x=x_\beta$, where $$A'=\{a,x_\beta\}\cup \left((A\cap P\restriction [\lambda_\beta, \lambda_\beta +\omega)) \setminus V(T) \right)$$ is countable, and $B'=B_2 \setminus (\Sigma_{\beta} \cup V(T))$.
\end{proof}
Letting $S_\beta = T^\frown R$ completes the construction of $S_\beta$ and thereby our proof is complete. 
\end{proof}

\section{A result extracted from Soukup's \cite{S17}}
\label{sec_final}

The following lemma of Soukup is the main tool of constructing large generalised paths. To state the lemma, we need the following definition.

\begin{definition}[{\cite[Definition~4.4]{S17}}]
Suppose that $G=(V,E)$ is a graph and $A \subseteq V$. We say that $A$ satisfies $\spadesuit_\kappa$ if for each $\lambda < \kappa$ there are $\kappa$ many disjoint paths concentrated on $A$ each of order type $\lambda$. 

Moreover, if we have a fixed edge-colouring $c \colon E \to [r]$ in mind, we write $\spadesuit_{\kappa,i}$ for ``$\spadesuit_\kappa$ in colour $i$".
\end{definition}

\begin{lemma}[{\cite[Lemma~4.6]{S17}}]
\label{lem_constructingpaths46}
Suppose that $G=(V,E)$ is a graph, $\kappa$ an infinite cardinal, and $A \in [V]^\kappa$. If
\begin{enumerate}
	\item $A$ is ${<}\kappa$-inseparable 
    
    and if $\kappa$ is uncountable, then    
    \item $A$ satisfies $\spadesuit_\kappa$, and
    \item there is a nice sequence of elementary submodels $(M_\alpha)_{\alpha<\text{cf}(\kappa)}$ for $\Set{A,G}$ covering $A$ so that there is $x_\beta \in A \setminus M_\beta$, $y_\beta \in V \setminus M_\beta$ with $x_\beta y_\beta \in E$ and
    \[|N_G(y_\beta) \cap A \cap M_\beta \setminus M_\alpha | \geq \omega \]
    for all $\alpha < \beta < \text{cf}(\kappa)$,
\end{enumerate}
then $A$ is covered by a generalised path $P$ concentrated on $A$.
\end{lemma}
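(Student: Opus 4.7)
The plan is to proceed by induction on $\kappa$. For the base case $\kappa = \aleph_0$, only hypothesis (1) is assumed, and ${<}\aleph_0$-inseparability just means that $A$ lies in a single component of $G$. Enumerating $A = \{a_n : n < \omega\}$ and greedily extending a finite initial segment via $G$-paths to swallow each uncovered $a_n$ yields a ray covering $A$; this ray has no limit elements and is therefore vacuously concentrated on $A$.

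For uncountable $\kappa$, I would build nested generalised paths $(P_\alpha)_{\alpha < \cf(\kappa)}$, each concentrated on $A$, with $V(P_\alpha) \subseteq M_\alpha$ and $A \cap M_\alpha \subseteq V(P_\alpha)$, and with $P_{\alpha'}$ an initial segment of $P_\alpha$ in path order for $\alpha' < \alpha$. The desired path is then $P = \bigcup_{\alpha < \cf(\kappa)} P_\alpha$, which covers $A$ since the $M_\alpha$ do.

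At a successor stage $\alpha \to \alpha+1$: by elementarity of $M_{\alpha+1}$, the hypotheses reflect to give an analogous setup inside $M_{\alpha+1}$, and since $|A \cap M_{\alpha+1}| < \kappa$, the inductive hypothesis yields inside $M_{\alpha+1}$ a path concentrated on $A$ that covers $A \cap M_{\alpha+1} \setminus V(P_\alpha)$. Attach this to the top of $P_\alpha$ by a finite $G$-path, using ${<}\kappa$-inseparability to avoid the fewer-than-$\kappa$ already-used vertices, with $\spadesuit_\kappa$ providing routing flexibility. At a limit stage $\beta$: let $P'_\beta = \bigcup_{\alpha < \beta} P_\alpha$ and crown it with the vertex $y_\beta$ from hypothesis (3). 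Concentration at $y_\beta$ on $A$ is automatic, because by niceness of the sequence $A \cap M_\beta \subseteq V(P'_\beta)$, so the $\omega$ many $A$-neighbours of $y_\beta$ in each ring $M_\beta \setminus M_\alpha$ are distributed cofinally above every vertex of $P_\alpha$ in the path order of $P'_\beta$. Finally append $x_\beta \in A \setminus M_\beta$ as the successor of $y_\beta$ to begin building $P_\beta$.

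The principal obstacle is ensuring that \emph{every} new limit element of $P$ inherits concentration on $A$. These new limits appear exactly at each $y_\beta$ crowning $P'_\beta$, and it is precisely hypothesis (3) --- a vertex outside $M_\beta$ with cofinally many $A$-neighbours inside $M_\beta$ --- that lets this crowning preserve concentration. Without (3), the union $P'_\beta$ would have a dangling limit position with no guaranteed down-neighbours in $A$, and the recursion would break at every limit of cofinality $< \kappa$. A secondary technical point is to arrange the successor extensions so that the small finite patches joining $P_\alpha$ to the piece inside $M_{\alpha+1}$ introduce no new limits; this is free since the patches are finite.
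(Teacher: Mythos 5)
First, a point of reference: the paper does not prove this lemma at all --- it is imported verbatim from Soukup \cite[Lemma~4.6]{S17} and used as a black box --- so your attempt can only be measured against Soukup's original argument. Your overall architecture (an increasing chain of paths $P_\alpha$ with $V(P_\alpha)\subseteq M_\alpha$ and $A\cap M_\alpha\subseteq V(P_\alpha)$, crowned at limits by $y_\beta$) is in the right spirit, and your limit-stage analysis is essentially correct: the $\omega$ many $A$-neighbours of $y_\beta$ in $M_\beta\setminus M_\alpha$ lie in $V(P'_\beta)\setminus V(P_\alpha)$, hence are cofinal in $P'_\beta$, so $y_\beta$ is a legitimate limit element witnessing concentration. (A minor quibble: ${<}\aleph_0$-inseparability is strictly stronger than lying in one component --- it forbids all \emph{finite} separators --- but your greedy base case in fact uses the correct, stronger property.)

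The genuine gap is the successor step. You assert that ``by elementarity the hypotheses reflect'' so that the inductive hypothesis applies inside $M_{\alpha+1}$ to $A\cap M_{\alpha+1}$, a set of size $\lambda<\kappa$. None of the three hypotheses reflects in the required form: ${<}\kappa$-inseparability of $A$ in $G$ does not make $A\cap M_{\alpha+1}$ ${<}\lambda$-inseparable \emph{inside} $G[M_{\alpha+1}]$ (the separators to be defeated are arbitrary small subsets of $M_{\alpha+1}$, not elements of it, and the connecting paths need not lie in $M_{\alpha+1}$); $\spadesuit_\kappa$ for $A$ does not yield $\spadesuit_\lambda$ for $A\cap M_{\alpha+1}$; and condition (3) for a shorter chain of submodels is not inherited. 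Since $A\cap(M_{\alpha+1}\setminus M_\alpha)$ is in general uncountable, the piece you must append has its own internal limit positions, each needing cofinally many down-neighbours in $A$ --- this is exactly the hard part, and your sketch has no mechanism for it. That mechanism is precisely what $\spadesuit_\kappa$ provides: for every $\lambda<\kappa$ it hands you $\kappa$ many pairwise disjoint \emph{prefabricated} paths of order type $\lambda$ already concentrated on $A$, and Soukup threads these together with the elements of $A$ via finite connecting paths (using ${<}\kappa$-inseparability to dodge the fewer than $\kappa$ vertices already used), so that every internal limit of the final path is a limit of a prefabricated piece. Demoting $\spadesuit_\kappa$ to ``routing flexibility'' inverts its role. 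A secondary unaddressed point: $P_\alpha$ typically has limit order type and hence no ``top'', so gluing anything after $P_\alpha$ at a successor stage itself creates a limit position needing cofinal down-neighbours in $P_\alpha$; you supply such a crowning vertex only at limit stages, although condition (3) does provide a $y_\beta$ for every $\beta<\cf(\kappa)$ and should be invoked at successors as well.
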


Recall that Soukup considers for fixed $\kappa$ and $r\ge 1$ the following statements: 
\begin{itemize} \item[$(\textup{IH})_{\kappa,r}$] Let $H$ be a graph of type $H_{\kappa,\kappa}$ with main class $A$ and second class $B$ and $r\ge 1$. Then for every $r$-colouring of $H$, there is a colour $k$ and an $X \in [A]^\kappa$ so that $X$ satisfies all three conditions of Lemma~\ref{lem_constructingpaths46} in colour $k$. 
\item[$(\textup{IH})_{\kappa}$] The statement $(\textup{IH})_{\kappa,r}$ holds for every $r\ge 1$.
\end{itemize} 

Soukup's main result is then

\begin{theorem}[{\cite[Theorem~5.10]{S17}}]
\label{thm_souupstheorem}
$(\textup{IH})_{\kappa,r}$ holds for all $\kappa$. In particular, if $G$ is a graph of type $H_{\kappa,\kappa}$ with a finite-edge colouring, then we can find a monochromatic path of size $\kappa$ concentrated on the main class of $G$.
\end{theorem}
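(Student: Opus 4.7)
The plan is to prove $(\textup{IH})_{\kappa,r}$ by induction on $\kappa$ (with $r$ treated as a parameter), following Soukup's original route. The ``in particular'' clause is then immediate: given a finite-edge-coloured $G$ of type $H_{\kappa,\kappa}$, apply $(\textup{IH})_{\kappa,r}$ to obtain a colour $k$ and an $X\in[A]^\kappa$ meeting all three hypotheses of Lemma~\ref{lem_constructingpaths46} in colour $k$, and the lemma supplies a generalised path of size $\kappa$ concentrated on $X$.

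For the base case $\kappa=\aleph_0$ only condition (1) of Lemma~\ref{lem_constructingpaths46} is non-vacuous, so it suffices to find a countable monochromatic ${<}\aleph_0$-inseparable $X\subseteq A$. Since in $H_{\aleph_0,\aleph_0}$ every $b_\xi$ is joined to all but finitely many $a_\zeta$, a direct Ramsey-type construction produces a colour $k$ and nested infinite subsets of $A$ exhibiting the desired infinite-connectedness in colour $k$; the diagonal sequence is the required $X$.

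For the inductive step with $\kappa$ uncountable, fix a strictly increasing cofinal sequence of regular cardinals $(\kappa_\alpha)_{\alpha<\cf(\kappa)}$ in $\kappa$ and a $\subseteq$-increasing continuous chain $(M_\alpha)_{\alpha<\cf(\kappa)}$ of elementary submodels of $H(\theta)$ for sufficiently large $\theta$, with $|M_\alpha|=\kappa_\alpha$, $\kappa_\alpha+1\subseteq M_\alpha$, containing $G,c,A,B$ as elements, and covering~$A$. First I would fix the global colour: at each stage $\alpha$, elementarity and the fact that $|A\cap M_\beta\setminus M_\alpha|=\kappa_\beta$ for $\alpha<\beta$ let one pick a pair $(x_\alpha,y_\alpha)\in (A\setminus M_\alpha)\times V$ witnessing condition~(3) of Lemma~\ref{lem_constructingpaths46} in some colour $k_\alpha$; a pigeonhole on $\cf(\kappa)$ then gives a cofinal subsequence on which $k_\alpha$ is constantly a fixed colour $k$, and after relabelling we may assume this holds for all $\alpha$. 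Second, inside each $M_\alpha$ I would invoke the inductive hypothesis $(\textup{IH})_{\kappa_\alpha,r}$ applied to an appropriate type-$H_{\kappa_\alpha,\kappa_\alpha}$ subgraph read off from $G$ inside $M_\alpha$, obtaining $X_\alpha\in[A\cap M_\alpha]^{\kappa_\alpha}$ satisfying the three conditions in colour $k$ at level $\kappa_\alpha$. Finally, set $X:=\bigcup_{\alpha<\cf(\kappa)} X_\alpha$ together with the $x_\alpha$'s, and verify the three conditions at level $\kappa$: (1) any ``separator'' $W\in[V\setminus\{u,v\}]^{<\kappa}$ lies in some $M_\beta$ so that $u,v$ are reconnected in $X_\beta\setminus W$ by ${<}\kappa_\beta$-inseparability; (2) given $\lambda<\kappa$, choose $\alpha$ with $\lambda<\kappa_\alpha$ and gather the $\kappa_\alpha$ many disjoint paths of order type $\lambda$ concentrated on $X_\alpha$ provided by $\spadesuit_{\kappa_\alpha,k}$, and iterate over a cofinal set of $\alpha$'s, the disjointness across stages being ensured by the submodel chain; (3) holds by the construction of the $(x_\alpha,y_\alpha)$.

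The main obstacle is the coherent amalgamation: one must make sure that the reflections $X_\alpha$ obtained at stage $\alpha$ glue to yield both global ${<}\kappa$-inseparability and global $\spadesuit_\kappa$, while the pair $(x_\alpha,y_\alpha)$ continues to witness~(3) in the correct colour $k$. The singular case is the delicate one, since $\cf(\kappa)$ may be small enough that the naive pigeonhole on colours must be combined with a careful choice of the chain $(M_\alpha)$ so that the common neighbourhoods $N_G(y_\beta)\cap A\cap M_\beta\setminus M_\alpha$ remain large for all $\alpha<\beta$; this is the step that consumes most of the work in Soukup's original argument and is what the strengthening to $\kappa$-star-linkedness, to appear in the paper's Theorem~\ref{thm_maintheorem111}, will have to address on top.
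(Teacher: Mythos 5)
First, a remark on what you are being compared against: the paper does not prove this theorem at all --- it is quoted verbatim from Soukup \cite[Theorem~5.10]{S17} and used as a black box (only the strengthened Theorem~\ref{thm_maintheorem111} is argued, and even there the paper defers almost everything to Soukup's original proof). So your proposal is measured against Soukup's argument, and it diverges from it in a way that leaves real gaps.

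The central gap is your amalgamation step, which you yourself flag as ``the main obstacle'' but do not resolve --- and it cannot be resolved in the form you set it up. Two concrete problems. (i) For a successor cardinal $\kappa=\mu^+$ there is no strictly increasing cofinal sequence of cardinals in $\kappa$, so your submodels all have size $\mu$ and the reflected hypotheses $(\textup{IH})_{\mu,r}$ only yield $\spadesuit_\mu$, i.e.\ concentrated paths of order type $<\mu$; but $\spadesuit_{\mu^+}$ demands $\kappa$ many disjoint concentrated paths of \emph{every} order type $\lambda<\mu^+$, in particular $\lambda\ge\mu$, and these cannot be obtained by gluing the reflected ones. (ii) The colour produced by your pigeonhole for condition (3) has no reason to coincide with the colours $k_\alpha$ produced by the reflected instances of the inductive hypothesis, and the sets $X_\alpha$ obtained independently inside the $M_\alpha$ need not be nested, so neither global ${<}\kappa$-inseparability nor a single witnessing colour survives the union. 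Soukup's actual proof avoids reflecting the whole statement: after disposing of condition (3) by the induction hypothesis, it fixes a \emph{maximal} set of colours $I$ for which some $X\in[A]^\kappa$ is ${<}\kappa$-inseparable in all colours of $I$ simultaneously, and then proves a dichotomy (his Lemma~5.9, cf.\ Lemma~\ref{lem_5.9strengthened}): either $\spadesuit_{\kappa,i}$ already holds for some $i\in I$ --- which finishes via Lemma~\ref{lem_constructingpaths46} --- or an ultrafilter partition produces a new colour $j\notin I$ with a $\kappa$-sized subset inseparable in colour $j$ as well, contradicting maximality; the case $I=[r]$ is handled separately. It is this maximality-plus-dichotomy engine, entirely absent from your sketch, that supplies both the single colour $k$ and $\spadesuit_{\kappa,k}$ for arbitrary $\kappa$, including successors.
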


We now strengthen Soukup's results as follows, and consider the statements:

\begin{itemize} \item[$(\textup{IH})_{\kappa,r}'$] 
As the statement $(\textup{IH})_{\kappa,r}$ with the additional requirement that $X$ is also $\kappa$-star-linked in colour $k$.
\item[$(\textup{IH})_{\kappa}'$] The statement  $(\textup{IH})_{\kappa,r}'$ holds for every $r\ge 1$.
\end{itemize} 

The corresponding version of theorem \cite[Theorem~5.10]{S17} then reads:

\begin{theorem}
\label{thm_maintheorem111}
$(\textup{IH})_{\kappa}'$ holds for all $\kappa$. In particular, if $G$ is a graph of type $H_{\kappa,\kappa}$ with main class $A$ with an r-edge colouring, then there is a colour $k \in [r]$ and $X \in [A]^\kappa$ which is $\kappa$-star-linked in colour $k$, such that $X$ is covered by a monochromatic path of size $\kappa$ in colour $k$ concentrated on $X$.
\end{theorem}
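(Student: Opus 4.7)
The plan is to follow Soukup's inductive proof of $(\textup{IH})_\kappa$ (Theorem~\ref{thm_souupstheorem}) line-by-line, carrying one extra invariant through the argument: at every stage, the set being built is $\kappa$-star-linked in the same colour in which the other properties of Lemma~\ref{lem_constructingpaths46} are being arranged. The induction is on $\kappa$, with a nested induction on $r$. The countable base reduces to a direct countable combinatorial construction and is not the hard part; the uncountable step is the main task.

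For the uncountable step I would begin with the observation that in any graph of type $H_{\kappa,\kappa}$ the main class $A$ is automatically $\kappa$-star-linked in $B$: for any finite $F=\{a_{\xi_1},\dots,a_{\xi_n}\}\subseteq A$ with $\xi_n=\max_i \xi_i$, the common neighbourhood contains $\{b_\zeta:\zeta\ge \xi_n\}$, which has size $\kappa$. Applying Lemma~\ref{lemma: ultrafilter} to the colouring then yields a partition $A=\bigsqcup_{i\in[r]} A_i$ with $A_i$ being $\kappa$-star-linked in $B$ in colour~$i$. By pigeonhole some $A_{i_0}$ has cardinality $\kappa$, and after relabelling I may assume $|A_1|=\kappa$ and that $A_1$ is $\kappa$-star-linked in $B$ in colour~$1$. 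Since $G[A_1,B]$ is again of type $H_{\kappa,\kappa}$ with main class $A_1$, it now suffices to produce $X\in [A_1]^\kappa$ satisfying conditions (1)--(3) of Lemma~\ref{lem_constructingpaths46} specifically in colour~$1$; any such $X$ is then $\kappa$-star-linked in colour~$1$ for free, since $X\subseteq A_1$.

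The heart of the proof is therefore to revisit Soukup's construction inside $G[A_1,B]$ and show that the colour $k$ can be pinned to $1$. Condition~(1) in colour~$1$ is immediate on any $X\in [A_1]^\kappa$: any two vertices in $A_1$ share $\kappa$ common colour-$1$ neighbours, so removing fewer than $\kappa$ vertices still leaves plenty of length-$2$ paths between them. Conditions~(2) and~(3) require one to go inside Soukup's recursion: at each point where an ultrafilter or pigeonhole step lets him choose a ``good'' colour, the $\kappa$-star-linkedness of $A_1$ in colour~$1$ guarantees that colour~$1$ is among the good choices, so colour~$1$ can be selected consistently. Whenever Soukup's argument calls the inductive hypothesis at some $\lambda<\kappa$, one calls the stronger $(\textup{IH})'_\lambda$ instead of $(\textup{IH})_\lambda$, so that the subpaths and auxiliary sets supplied by the induction already carry the star-linked property and can be glued together without loss.

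The main obstacle I expect is precisely this bookkeeping step: verifying that every choice point in Soukup's recursion is flexible enough to accept colour~$1$, once $A_1$ has been made $\kappa$-star-linked in that colour. In other words, one needs to check that ``being star-linked in colour~$1$'' is in fact the condition driving Soukup's colour selections at each stage—this is what justifies the claim that the strengthening can be \emph{extracted} from his proof rather than requiring an essentially new argument. Once this is carried out, the resulting $X\in [A_1]^\kappa$ is the set required by $(\textup{IH})'_{\kappa,r}$, and the ``in particular'' clause of the theorem then follows by applying Lemma~\ref{lem_constructingpaths46} to $X$ in colour~$1$.
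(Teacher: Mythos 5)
Your overall framework (induction on $\kappa$ and $r$, reuse of Soukup's machinery, upgrading the inductive hypothesis to $(\textup{IH})'_\lambda$ at recursive calls) matches the paper, and your two preliminary observations --- that the main class of an $H_{\kappa,\kappa}$-type graph is automatically $\kappa$-star-linked, and that $\kappa$-star-linkedness implies ${<}\kappa$-inseparability --- are both correct and both used there. The gap is your central claim that the colour can be \emph{pinned in advance}: you choose $A_1\in[A]^\kappa$ that is $\kappa$-star-linked in colour $1$ and then assert that Soukup's recursion can be steered to select colour $1$ at every choice point, so that conditions (2) and (3) of Lemma~\ref{lem_constructingpaths46} hold for some $X\in[A_1]^\kappa$ in colour $1$. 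This is exactly the step that fails. A set that is $\kappa$-star-linked in colour $1$ need not satisfy $\spadesuit_{\kappa,1}$: star-linkedness controls common neighbourhoods of \emph{finite} sets only, whereas $\spadesuit_\kappa$ demands concentrated paths of every order type $\lambda<\kappa$, whose limit points need down-neighbourhoods cofinal in an infinite initial segment; it is consistent that a countable subset of $A_1$ is $\kappa$-star-linked in colour $1$ while no vertex sends infinitely many colour-$1$ edges to it, so not even a concentrated path of order type $\omega+1$ exists in that colour. Producing the long concentrated paths requires the inductive hypothesis at level $\lambda$, which returns a path in \emph{some} colour, not a colour of your choosing; Soukup's Lemma 5.9 is precisely the dichotomy coping with this, and its second alternative forces a \emph{change of colour} to some $j\notin I$. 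The colour of the final path is thus an output of a maximality argument, not an input, and what you defer as ``bookkeeping'' is the substantive mathematical obstruction.

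The paper's actual repair keeps Soukup's structure intact and strengthens it at exactly one point. It fixes $I\subseteq[r]$ \emph{maximal} such that some $X\in[A]^\kappa$ is $\kappa$-star-linked in every colour of $I$ simultaneously (replacing Soukup's ``${<}\kappa$-inseparable in every colour of $I$''), and strengthens alternative (b) of Lemma 5.9 so that the colour classes $X_j$ it produces are $\kappa$-star-linked rather than merely inseparable; this upgrade is a short uniform-ultrafilter argument in the spirit of Lemma~\ref{lemma: ultrafilter}. Then, if $\spadesuit_{\kappa,i}$ failed for all $i\in I$, alternative (b) would yield some $j\notin I$ and $X_j\in[X]^\kappa$ star-linked in all colours of $I\cup\{j\}$ (star-linkedness passes to subsets), contradicting maximality of $I$; hence $\spadesuit_{\kappa,k}$ holds for \emph{some} $k\in I$, and that $k$ --- not a pre-chosen colour --- is the colour in the statement. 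To repair your write-up, replace the ``pin colour $1$'' reduction by this maximality argument over $I$ and supply the ultrafilter strengthening of Lemma 5.9(b).
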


The proof of Theorem~\ref{thm_maintheorem111} relies on the following lemma.

\begin{lemma}[{cf.\ \cite[Lemma 5.9]{S17}}]
\label{lem_5.9strengthened}
Let $\kappa$ be an infinite cardinal. Suppose that $c$ is an $r$-edge colouring of a graph $G=(V,E)$ of type $H_{\kappa,\kappa}$ with main class $A$ and second class $B$. Let $I \subsetneq [r]$, $X \in [A]^\kappa$ and suppose that $X$ is $\kappa$-star linked in all colours $i \in I$. If $(\textup{IH})_\lambda$ holds for all $\lambda < \kappa$ then either
\begin{enumerate}
	\item[(a)] there is an $i \in I$ such that $X$ satisfies $\spadesuit_{\kappa,i}$ , or
    \item[(b)] there is $\tilde{X} \in [X]^{{<}\kappa}$ and a partition $\set{X_j}:{j \in [r] \setminus I}$ of $X \setminus \tilde{X}$ such that $X_j$ is $\kappa$-star-linked in $B$ in colour $j$ for each $j \in [r] \setminus I$.
\end{enumerate}
\end{lemma}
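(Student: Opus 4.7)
The plan is to apply an ultrafilter argument to partition $X$ by colour and then branch on whether the $I$-coloured portion is large or small.

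First, since $X$ is $\kappa$-star-linked in $B$ in every colour $i \in I$, the family $\{N[F] \cap B : F \in [X]^{{<}\omega}\}$ is a filter base of $\kappa$-sized subsets of $B$, so one may extend it to a uniform ultrafilter $\mathcal{U}$ on $B$. For each $x \in X$ let $\phi(x) \in [r]$ be the unique colour with $N(x, \phi(x)) \cap B \in \mathcal{U}$. Then $X_j' := \phi^{-1}(j)$ yields a partition of $X$ in which each piece $X_j'$ is $\kappa$-star-linked in $B$ in colour $j$, because $N[F, j] \cap B = \bigcap_{x \in F}(N(x, j) \cap B) \in \mathcal{U}$ for any finite $F \subseteq X_j'$, and members of $\mathcal{U}$ have cardinality $\kappa$ by uniformity. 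This star-linked property for each piece is precisely the strengthening of Soukup's original Lemma~5.9 that we need.

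Setting $\tilde{X} := \bigcup_{i \in I} X_i'$, we branch on its cardinality. If $|\tilde{X}| < \kappa$, then taking $X_j := X_j'$ for $j \in [r] \setminus I$ delivers conclusion (b). Otherwise $|X_i'| = \kappa$ for some $i \in I$, and the task becomes to prove $\spadesuit_{\kappa, i}$ for $X$. Given $\lambda < \kappa$, we construct $\kappa$ pairwise disjoint colour-$i$ paths of order type $\lambda$ concentrated on $X_i' \subseteq X$ by transfinite recursion. At stage $\alpha < \kappa$ let $U_\alpha$ be the set of previously used vertices; since $|U_\alpha| < \kappa$, the set $X_i' \setminus U_\alpha$ is still $\kappa$-star-linked in $B \setminus U_\alpha$ in colour $i$. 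Fix an enumeration $Y_\alpha = \{y_\xi : \xi < \lambda\} \subseteq X_i' \setminus U_\alpha$, and greedily choose distinct $z_\xi \in (N[\{y_\eta : \eta \leq \xi\}, i] \cap B) \setminus (U_\alpha \cup Y_\alpha \cup \{z_\eta : \eta < \xi\})$; the colour-$i$ subgraph induced on $Y_\alpha \cup Z_\alpha$ is then of type $H_{\lambda, \lambda}$ with main class $Y_\alpha$. Applying $(\textup{IH})_\lambda$ in its single-colour form ($r = 1$) together with Lemma~\ref{lem_constructingpaths46} produces a colour-$i$ generalised path covering some $X''_\alpha \in [Y_\alpha]^\lambda$ and concentrated on $X''_\alpha$; truncating to its initial segment of order type exactly $\lambda$ yields the desired $P_\alpha$.

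The main obstacle is the hard case. One must check that the greedy choice of $z_\xi$ never fails (the common neighbourhood $N[\{y_\eta : \eta \leq \xi\}, i] \cap B$ still has cardinality $\kappa > \lambda$ after removing the fewer than $\kappa$ blocked vertices), that the subgraph $Y_\alpha \cup Z_\alpha$ really is of type $H_{\lambda, \lambda}$ in colour $i$ with main class $Y_\alpha$, and that the truncation of the $(\textup{IH})_\lambda$-path to order type $\lambda$ preserves concentration on $X_i'$. The corner cases $\lambda < \omega$ (where $P_\alpha$ is a finite path built directly) and $\lambda = \omega$ (where a greedy monochromatic ray from the star-linkedness of $X_i'$ suffices) may be dispatched separately. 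Once $\spadesuit_{\kappa, i}$ has been secured, conclusion (a) follows and the lemma is proved.
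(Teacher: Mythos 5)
Your treatment of alternative (b) is sound and is essentially the paper's Claim~A: extending the filter base $\set{N[F]\cap B}:{F \in [X]^{{<}\omega}}$ to a uniform ultrafilter and sorting $X$ by the colour whose neighbourhood lies in the ultrafilter does yield pieces that are $\kappa$-star-linked in $B$ in their respective colours. The problem is the other branch. Your argument there would show that any $\kappa$-sized set which is $\kappa$-star-linked in colour $i$ satisfies $\spadesuit_{\kappa,i}$; but the hypothesis of the lemma already asserts that $X$ itself is $\kappa$-star-linked in every colour of $I$, so if that implication were true, alternative (a) would hold outright whenever $I\neq\emptyset$ and the entire dichotomy would be vacuous. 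In fact the implication is false. Consider a $2$-colouring of $K_{\omega_1,\omega_1}$ where $B$ is indexed by pairs $(F,\gamma)$ with $F\in[A]^{{<}\omega}$ and $\gamma<\omega_1$, and the edge from $a$ to $(F,\gamma)$ has colour $1$ iff $a\in F$. Every finite $F_0\subseteq A$ has $\omega_1$ common colour-$1$ neighbours, so $A$ is $\omega_1$-star-linked in colour $1$ (and your ultrafilter can be chosen so that $X_1'$ has size $\omega_1$), yet every vertex of $B$ has finite colour-$1$ degree. Hence there is no colour-$1$ path of order type $\omega+1$ concentrated on $A$ at all --- its limit vertex would have to lie in $B$ and have infinitely many colour-$1$ down-neighbours in $A$ --- so $\spadesuit_{\omega_1,1}$ fails even though your case hypothesis holds.

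The step of your construction that breaks is the greedy choice of $z_\xi$: for $\xi\geq\omega$ you need a vertex in the common colour-$i$ neighbourhood of the \emph{infinite} set $\set{y_\eta}:{\eta\leq\xi}$, and $\kappa$-star-linkedness controls only common neighbourhoods of finite sets; in the example above these infinite common neighbourhoods are empty. For countable $\lambda>\omega$ your method produces at best rays of order type $\omega$, and no truncation can manufacture the missing limit points. This is precisely why the paper does not attempt a self-contained proof: it runs Soukup's proof of \cite[Lemma 5.9]{S17} unchanged --- there the $\spadesuit$ alternative is extracted from a genuinely harder argument, and it is the \emph{failure} of $\spadesuit_{\kappa,i}$ for all $i\in I$ that produces, after deleting a small set $\tilde{X}$, the hypothesis $|B\setminus\bigcup\set{N(x,i)}:{x\in F,\, i\in I}|=\kappa$ required for the ultrafilter step --- and only upgrades the final claim from ``${<}\kappa$-inseparable'' to ``$\kappa$-star-linked''. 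Your ultrafilter argument is the correct replacement for Soukup's Claim~5.9.3, but it must be applied after, not instead of, the hard part of his proof.
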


\begin{proof}[Proof of Lemma~\ref{lem_5.9strengthened}]
Follow the proof of \cite[Lemma 5.9, p. 261]{S17} and in the last line apply the following Claim A instead of \cite[Claim 5.9.3]{S17}.

\bigskip

\textbf{Claim A} (cf.\ \cite[Claim 5.9.3]{S17})\textbf{.} \emph{Suppose that $c$ is an $r$-edge colouring of a graph $G=(V,E)$ of type $H_{\kappa,\kappa}$ with main class $A$ and second class $B$. Let $I \subset r$ and $X \subset A$. If for each finite subset $F \subset A$ we have
\[ |B \setminus \bigcup \set{N(x,i)}:{x \in F, i \in I} | = \kappa, \]
then there is a partition $\set{X_j}:{j \in r \setminus I}$ of $X$ such that $X_j$ is $\kappa$-star-linked in $B$ in colour $j$ for each $j \in [r] \setminus I$.
}

\begin{proof}\renewcommand{\qedsymbol}{$\Diamond$}
Take a uniform ultrafilter $U$ on $B$ so that $B \setminus \bigcup \set{N(x,i)}:{x \in F, i \in I} \in U$ for all finite subsets $F \subset A$. Define $X_j=\{x\in X\setminus \tilde{X}\colon N(x,j)\in U\}$ for each colour $j$ and note that $\set{X_j}:{j \in[r] \setminus I}$ partitions $X$. Since ultrafilters are closed under finite intersections, it follows that 
\[ N[F,j] \in U \]
for all finite subsets $F \subset X_j$ and $j \in [r] \setminus I$, and since the filter $U$ is uniform, we have $|N[F,j]| = \kappa$ and therefore that $X_j$ is $\kappa$-star-linked in $B$ for each such $j$.
\end{proof}

Indeed, by applying Claim A to the set $X \setminus \p{X^* \cup \tilde{A}}$ (defined in Soukup's proof), we readily obtain the stronger conclusion that the $X_j$ are not only ${<}\kappa$-inseparable, but even $\kappa$-star-linked.
\end{proof}

\begin{proof}[Proof of Theorem~\ref{thm_maintheorem111}]
We prove $(\textup{IH})_{\kappa,r}'$ by induction on $\kappa$ and $r$.

Note that $(\textup{IH})_{\omega}'$ holds by \cite[Lemma~3.4]{S17}, so we may suppose that $\kappa$ is uncountable. Also, $(\textup{IH})_{\kappa,1}'$ holds: From \cite[Observation~5.7]{S17}, we know that for any graph $G$ of type $H_{\kappa,\kappa}$, the main class of $G$ satisfies all conditions of Lemma~\ref{lem_constructingpaths46} (and so $(\textup{IH})_{\kappa,1}$ holds). However, it is clear that the main class $A$ is automatically $\kappa$-star-linked in $G$, and hence we have $(\textup{IH})_{\kappa,1}'$. 

Now fix an $r$-edge colouring of a graph $G$ of type $H_{\kappa,\kappa}$ with main class $A$ and second class $B$. As in Soukup's proof of {\cite[Theorem~5.10]{S17}} (Theorem~\ref{thm_souupstheorem} above), we may assume by the induction assumption $(\textup{IH})_{\lambda}'$ for $\lambda < \kappa$ that every $X \in [A]^\kappa$ satisfies condition $(3)$ in Lemma~\ref{lem_constructingpaths46}. 

Next, Soukup fixes a maximal $I\subseteq [r]$ with the property that there is a set $X\in [A]^\kappa$ such that $X$ is ${<}\kappa$-inseparable in all colours $i\in I$ and he fixes such $I$ and $X$. Instead, we now fix $I$ maximal with the property that there is a set $X\in [A]^\kappa$ such that $X$ is $\kappa$-star-linked in all colours $i\in I$. Then fix such $I$ and $X$. 

\bigskip

\textbf{Claim B} (cf.\ \cite[Claim 5.10.1]{S17})\textbf{.} \emph{There is $k \in I$ such that $\spadesuit_{\kappa,k}$ holds for $X$.
}

\begin{proof}\renewcommand{\qedsymbol}{$\Diamond$}
Suppose that $X$ fails $\spadesuit_{\kappa,i}$ for all $i \in I$. If $I \subsetneq [r]$, then apply Lemma~\ref{lem_5.9strengthened} in $G$ to the set $X$ and the set of colours $I$. As $X$ fails $X$ fails $\spadesuit_{\kappa,i}$ for all $i \in I$, condition $(b)$ of Lemma~\ref{lem_5.9strengthened} must hold (note that by induction assumption, $(\textup{IH})_{\lambda}'$ and hence $(\textup{IH})_{\lambda}$ hold for all $\lambda < \kappa$, so we may apply Lemma~\ref{lem_5.9strengthened}): However, this means there is a colour $j \in [r] \setminus I$ and a set $X_j \in [X]^\kappa$ such that $X_j$ is $\kappa$-star-linked in colour $j$ as well. But the fact that $X_j$ is then $\kappa$-star-linked in all colours $i \in I \cup \Set{j}$ contradicts the maximality of $I$.

Therefore, $I = [r]$ must hold. From this, however, we may obtain a contradiction precisely as in the second half of the proof of \cite[Claim 5.10.1]{S17}.
\end{proof}

Hence, the ``in particular'' part of the theorem, and hence Theorem~\ref{thm: find large path} follows by applying Lemma~\ref{lem_constructingpaths46} to the set $X$ provided by $(\textup{IH})_{\kappa}'$. The proof is complete.
\end{proof}

\end{document}